\newcommand{\Gal}{\operatorname{Gal}}
\newcommand{\Qp}{\mathbf{Q}_p}
\newcommand{\Zp}{\mathbf{Z}_p}
\newcommand{\Fp}{\mathbf{F}_p}
\newcommand{\ZZ}{\mathbf{Z}}
\newcommand{\FF}{\mathbf{F}}
\newcommand{\NN}{\mathbf{N}}
\newcommand{\RR}{\mathbf{R}}
\newcommand{\nbf}{\mathbf{n}}
\newcommand{\eps}{\varepsilon}
\newcommand{\OO}{\mathcal{O}}
\newcommand{\HH}{\mathcal{H}}
\newcommand{\Aut}{\operatorname{Aut}}
\newcommand{\Nm}{\operatorname{N}}
\newcommand{\sh}{\operatorname{sh}}
\newcommand{\orb}{\operatorname{orb}}
\newcommand{\cont}{\operatorname{cont}}
\newcommand{\alg}{\operatorname{alg}}
\newcommand{\Card}{\operatorname{Card}}
\newcommand{\Int}{\operatorname{Int}}
\newcommand{\LT}{\operatorname{LT}}
\newcommand{\val}{\operatorname{val}}
\newcommand{\hyphen}{\!\operatorname{-}\!}
\newcommand{\vp}{\val_p}
\newcommand{\vm}{\val_M}
\newcommand{\vx}{\val_X}
\newcommand{\vy}{\val_Y}
\newcommand{\ve}{\val_{\mathrm{E}}}
\newcommand{\bigO}{\operatorname{O}}
\newcommand{\dcroc}[1]{[\![ #1 ]\!]}
\newcommand{\dpar}[1]{(\!( #1 )\!)}
\newcommand{\efont}{\mathbf{E}}
\newcommand{\calE}{\mathcal{E}}
\newcommand{\kf}{E}
\newcommand{\et}{\widetilde{\mathbf{E}}}
\newcommand{\GL}{\operatorname{GL}}
\newcommand{\Gm}{\mathbf{G}_\mathrm{m}}
\renewcommand{\geq}{\geqslant}
\renewcommand{\leq}{\leqslant} 
\renewcommand{\phi}{\varphi} 
\renewcommand{\projlim}{\varprojlim} 
\newcommand{\cR}{\mathcal{R}}
\newcommand{\cT}{\mathcal{T}}
\newcommand{\LC}{\operatorname{LC}}
\author{Laurent Berger}
\address{UMPA de l'ENS de Lyon \\
UMR 5669 du CNRS}
\email{laurent.berger@ens-lyon.fr}
\urladdr{perso.ens-lyon.fr/laurent.berger/}
\author{Sandra Rozensztajn}
\address{UMPA de l'ENS de Lyon \\
UMR 5669 du CNRS}
\email{sandra.rozensztajn@ens-lyon.fr}
\urladdr{perso.ens-lyon.fr/sandra.rozensztajn/}
\title[Super-H\"older vectors and the field of norms]{Super-H\"older vectors and \\ the field of norms}
\date{\today}
\begin{document}

\begin{abstract}
Let $\kf$ be a field of characteristic $p$. In a previous paper of ours, we defined and studied super-H\"older vectors in certain $\kf$-linear representations of $\Zp$. In the present paper, we define and study super-H\"older vectors in certain $\kf$-linear representations of a general $p$-adic Lie group. We then consider certain $p$-adic Lie extensions $K_\infty/K$ of a $p$-adic field $K$, and compute the super-H\"older vectors in the tilt of $K_\infty$. We show that these super-H\"older vectors are the perfection of the field of norms of $K_\infty/K$. By specializing to the case of a Lubin-Tate extension, we are able to recover $\kf \dpar{Y}$ inside the $Y$-adic completion of its perfection,  seen as a valued $E$-vector space endowed with the action of $\OO_K^\times$ given by the endomorphisms of the corresponding Lubin-Tate group.
\end{abstract}

\subjclass{11S; 12J; 13J; 22E}


\maketitle

\setcounter{tocdepth}{2}
\tableofcontents

\newpage

\setlength{\baselineskip}{18pt}
\section*{Introduction}

Let $\kf$ be a field of characteristic $p$, for example a finite field. In our paper \cite{BR22}, we defined and studied super-H\"older vectors in certain $\kf$-linear representations of the $p$-adic Lie group $\Zp$. These vectors are a characteristic $p$ analogue of locally analytic vectors. They allowed us to recover $\kf \dpar{X}$ inside the $X$-adic completion of its perfection, seen as a valued $E$-vector space endowed with the action of $\Zp^\times$ given by $a \cdot f(X) = f((1+X)^a-1)$.

In the present paper, we define and study super-H\"older vectors in certain $\kf$-linear representations of a general $p$-adic Lie group. We then consider certain $p$-adic Lie extensions $K_\infty/K$ of a $p$-adic field $K$, and compute the super-H\"older vectors in the tilt of $K_\infty$. We show that these super-H\"older vectors are the perfection of the field of norms of $K_\infty/K$. By specializing to the case of a Lubin-Tate extension, we are able to recover $\kf \dpar{Y}$ inside the $Y$-adic completion of its perfection,  seen as a valued $E$-vector space endowed with the action of $\OO_K^\times$ given by the endomorphisms of the corresponding Lubin-Tate group.

We now give more details about the contents of our paper. Let $\Gamma$ be a $p$-adic Lie group. It is known that $\Gamma$ always has a uniform open pro-$p$ subgroup $G$. Let $G$ be such a subgroup, and let $G_i = G^{p^i}$ for $i \geq 0$. Let $M$ be an $E$-vector space, endowed with a valuation $\vm$ such that $\vm(xm) = \vm(m)$ if $x \in \kf^\times$. We assume that $M$ is separated and complete for the $\vm$-adic topology. We say that a function $f : G \to M$ is super-H\"older if there exist constants $e > 0$ and $\lambda,\mu \in \RR$ such that $\vm(f(g)-f(h)) \geq p^\lambda \cdot p^{ei} +\mu$ whenever $gh^{-1} \in G_i$, for all $g,h \in G$ and $i \geq 0$. If $M$ is now endowed with an action of $G$ by isometries, and $m \in M$, we say that $m$ is a super-H\"older vector if the orbit map $g \mapsto g \cdot m$ is a super-H\"older function $G \to M$. We let $M^{G\hyphen e \hyphen \sh,\lambda}$ denote the space of super-H\"older vectors for given constants $e$ and $\lambda$ as in the definition above. The space of vectors of $M$ that are super-H\"older for a given $e$ is independent of the choice of the uniform subgroup $G$, and denoted by $M^{e \hyphen \sh}$. When $G=\Zp$ and $e=1$, we recover the definitions of \cite{BR22}. If $\Gamma$ is a $p$-adic Lie group and $e=1$, we get an analogue of locally $\Qp$-analytic vectors. If $K$ is a finite extension of $\Qp$, $\Gamma$ is the Galois group of a Lubin-Tate extension of $K$, and $e=[K:\Qp]$, we seem to get an analogue of locally $K$-analytic vectors.

From now on, assume that $p \neq 2$. Let $K$ be a $p$-adic field and let $K_\infty/K$ be an almost totally ramified $p$-adic Lie extension, with Galois group $\Gamma$ of dimension $d \geq 1$. The tilt of $K_\infty$ is the fraction field $\et_{K_\infty}$ of $\projlim_{x \mapsto x^p} \OO_{K_\infty} / p$. It is a perfect complete valued field of characteristic $p$, endowed with an action of $\Gamma$ by isometries. The field $\et_{K_\infty}$ naturally contains the field of norms $X_K(K_\infty)$ of the extension $K_\infty/K$, and it is known that $\et_{K_\infty}$ is the completion of the perfection of $X_K(K_\infty)$. We have the following result (theorem \ref{decfonsh}).

\begin{enonce*}{Theorem A} 
We have $\et_{K_\infty}^{d\hyphen\sh} = \cup_{n \geq 0} \phi^{-n}(X_K(K_\infty))$.
\end{enonce*}

Assume now that $K$ is a finite extension of $\Qp$, with residue field $k$, and let $\LT$ be a Lubin-Tate formal group attached to $K$. Let $K_\infty$ be the extension of $K$ generated by the torsion points of $\LT$, so that $\Gal(K_\infty/K)$ is isomorphic to $\OO_K^\times$. The field of norms $X_K(K_\infty)$ is isomorphic to $k \dpar{Y}$, and $\OO_K^\times$ acts on this field by the endomorphisms of the Lubin-Tate group: $a \cdot f(Y) = f([a](Y))$. Let $d=[K:\Qp]$. The following (theorem \ref{decprec}) is a more precise version of theorem A in this situation.

\begin{enonce*}{Theorem B} 
If $j \geq 1$, then $\et_{K_\infty}^{1+p^j \OO_K \hyphen d\hyphen\sh,dj} =k \dpar{Y}$.
\end{enonce*}

If $K=\Qp$ and $K_\infty/K$ is the cyclotomic extension, theorem B was proved in \cite{BR22}. A crucial ingredient of the proof of this theorem was Colmez' analogue of Tate traces for $\et_{K_\infty}$. If the Lubin-Tate group if of height $\geq 2$, there are no such traces (we state and prove a precise version of this assertion in \S \ref{sublt}). Instead of Tate traces, we a theorem of Ax and a precise characterization of the field of norms $X_K(K_\infty)$ inside $\et_{K_\infty}$ in order to prove theorem A.

As an application of theorem B, we compute the perfectoid commutant of $\Aut(\LT)$. If $b \in \OO_K^\times$ and $n \in \ZZ$, then $u(Y) = [b](Y^{q^n})$ is an element of $\et_{K_\infty}^+$ that satisfies the functional equation $u \circ [g](Y)  = [g] \circ u(Y)$ for all $g \in \OO_K^\times$. Conversely, we prove the following (theorem \ref{ltcom}).

\begin{enonce*}{Theorem C}
If $u \in \et_{K_\infty}^+$ is such that $\vy(u) > 0$ and $u \circ  [g] = [g] \circ u$ for all $g \in \OO_K^\times$, there exists $b \in \OO_K^\times$ and $n \in \ZZ$ such that $u(Y) = [b](Y^{q^n})$.
\end{enonce*}

In the last section, we give a characterization of super-H\"older functions on a uniform pro-$p$ group in terms of their Mahler expansions (theorem \ref{mahlthm}). In order to do so, we prove some results of independent interest on the space of continuous functions on $\OO_K^d$ with values in a valued $E$-vector space $M$ as above.

At the end of \cite{BR22}, we suggested an application of super-H\"older vectors for the action of $\Zp$ to the $p$-adic local Langlands correspondence for $\GL_2(\Qp)$. We hope that this general theory of super-H\"older vectors, especially in the Lubin-Tate case, will have applications to the $p$-adic local Langlands correspondence for other fields than $\Qp$.


\section{Super-H\"older functions and vectors}
\label{secshf}

In this section, we define Super-H\"older vectors inside a valued $E$-vector space $M$ endowed with an action of a $p$-adic Lie group $\Gamma$. The definition is very similar to the one that we gave for $\Gamma=\Zp$ in our paper \cite{BR22}. The main new technical tool is the existence of uniform open subgroups of $\Gamma$. These uniform subgroups look very much like $\Zp^d$ in a sense that we make precise.

\subsection{Uniform pro-$p$ groups}
\label{subspv}

Uniform pro-$p$ groups are defined at the beginning of \S 4 of \cite{P99}. We do not recall the definition, nor the notion of rank of a uniform pro-$p$ group, but rather point out the following properties of uniform pro-$p$ groups. A coordinate (below) is simply a homeomorphism.

\begin{prop}
\label{satpv}
If $G$ is a uniform pro-$p$ group of rank $d$, then
\begin{enumerate}
\item $G_i = \{ g^{p^i}$, $g \in G\}$ is an open normal (and uniform) subgroup of $G$ for $i \geq 0$
\item We have $[G_i:G_{i+1}] = p^d$ for $i \geq 0$
\item There is a coordinate $c : G \to \Zp^d$ such that $c(G_i) = (p^i \Zp)^d$ for  $i \geq 0$
\item If $g,h \in G$, then $gh^{-1} \in G_i$ if and only if $c(g)-c(h) \in (p^i \Zp)^d$
\end{enumerate}
\end{prop}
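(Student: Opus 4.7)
The plan is to derive all four statements from the structure theory of uniform pro-$p$ groups developed in \S 4 of \cite{P99}. The crucial black-box input is that a uniform pro-$p$ group $G$ of rank $d$ admits an ordered topological generating set $g_1,\ldots,g_d$ such that every $g \in G$ has a unique expression $g=g_1^{x_1}\cdots g_d^{x_d}$ with $(x_1,\ldots,x_d)\in\Zp^d$, and the resulting map $G\to\Zp^d$ is a homeomorphism.

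For parts (1) and (2) I would invoke the standard results on the lower $p$-series of a uniform group: $G^{p^i}$ is open and normal, coincides with the set of naive $p^i$-th powers $\{g^{p^i}:g\in G\}$, and is itself uniform of rank $d$; moreover the $p$-th power map induces bijections between the successive abelian quotients $G_i/G_{i+1}$. Since $G/G_1$ has order $p^d$ (the rank $d$ equals $\dim_{\Fp} G/\Phi(G)$ and the Frattini subgroup of a uniform group is $G^p=G_1$), this forces $[G_i:G_{i+1}]=p^d$ for all $i\geq 0$.

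For part (3), the coordinate $c$ is the inverse of the homeomorphism $(x_1,\ldots,x_d)\mapsto g_1^{x_1}\cdots g_d^{x_d}$. The equality $c(G_i)=(p^i\Zp)^d$ follows by applying the structure theorem to $G_i$, which is uniform of rank $d$ with topological generating set $g_1^{p^i},\ldots,g_d^{p^i}$, after verifying the identity $(g_1^{p^i})^{a_1}\cdots(g_d^{p^i})^{a_d}=g_1^{p^i a_1}\cdots g_d^{p^i a_d}$. This last identity I would establish by induction, using the commutator estimate $[G,G_i]\subset G_{i+1}$ valid in any uniform group to move the exponents $p^i$ through the product modulo successively deeper subgroups.

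The delicate point is (4), since $c$ is not a group homomorphism. The plan is to reduce it to the assertion that $c$ descends to a bijection of sets $\bar c : G/G_i \to (\Zp/p^i\Zp)^d$, after which the equivalence is immediate: $gh^{-1}\in G_i$ iff $g$ and $h$ have the same image in $G/G_i$, iff $\bar c(g)=\bar c(h)$, iff $c(g)-c(h)\in(p^i\Zp)^d$. Well-definedness of $\bar c$ amounts to showing that right multiplication by an element of $G_i$ perturbs the coordinates $(x_1,\ldots,x_d)$ only modulo $p^i$, which is exactly the unique-expression statement applied to the finite $p$-group $G/G_i$ of order $p^{di}$: each element has a unique normal form $\bar g_1^{a_1}\cdots\bar g_d^{a_d}$ with $0\leq a_j<p^i$. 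The main obstacle is to identify and cite the precise statement in \cite{P99} giving this canonical form modulo $G_i$; once it is in hand, the rest is bookkeeping.
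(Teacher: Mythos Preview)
Your proposal is correct and follows the same route as the paper, which simply cites \S 4 of \cite{P99} (and alternatively \S 26 of \cite{S11} via the saturated $p$-valuation) without spelling out any of the arguments you sketch. One small remark: the identity $(g_1^{p^i})^{a_1}\cdots(g_d^{p^i})^{a_d}=g_1^{p^i a_1}\cdots g_d^{p^i a_d}$ that you propose to verify by a commutator argument is in fact trivially true term by term, so no induction is needed there; the nontrivial input for (3) is only that $g_1^{p^i},\ldots,g_d^{p^i}$ form an ordered topological generating set for the uniform group $G_i$.
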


\begin{proof}
Properties (1-4) are proved in \S 4 of \cite{P99}. Alternatively, a uniform pro-$p$ group $G$ has a natural integer valued $p$-valuation $\omega$ such that $(G,\omega)$ is saturated (remark 2.1 of \cite{K05}). Properties (1-4) are then proved in \S 26 of \cite{S11}.
\end{proof}

For example, the pro-$p$ group $\Zp^d$ is uniform for all $d \geq 1$.

\begin{lemm}
\label{unifsub}
 If $G$ is a uniform pro-$p$ group, and $H$ is a uniform open subgroup of $G$, there exists $j \geq 0$ such that $G_{i+j} \subset H_i$ for all $i \geq 0$.
\end{lemm}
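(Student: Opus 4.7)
The plan is to find a suitable $j$ using the openness of $H$, and then to raise the inclusion $G_j \subset H$ to the $p^i$-th power on both sides. Because $H$ is open in $G$ and, by part (3) of proposition \ref{satpv}, the coordinate $c : G \to \Zp^d$ sends $G_j$ to $(p^j \Zp)^d$, the family $(G_j)_{j \geq 0}$ is a fundamental system of open neighbourhoods of the identity. Hence there exists some $j \geq 0$ with $G_j \subset H$, and I claim this $j$ does the job.

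The second step rests on the identity $(G_j)^{p^i} = G_{i+j}$, obtained from part (1) of proposition \ref{satpv}: every element of $G_j$ has the form $g^{p^j}$ with $g \in G$, and then $(g^{p^j})^{p^i} = g^{p^{i+j}}$ lies in $G_{i+j}$ since powers of a single element commute; conversely, each $h^{p^{i+j}} = (h^{p^j})^{p^i}$ belongs to $(G_j)^{p^i}$. The same identity applied to $H$ gives $H^{p^i} = H_i$. Raising $G_j \subset H$ to the $p^i$-th power therefore yields
\[ G_{i+j} = (G_j)^{p^i} \subset H^{p^i} = H_i, \]
which is the desired inclusion.

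The only step requiring care is the identification $G_k = \{g^{p^k} : g \in G\}$ of the subgroup $G_k$ with the literal set of $p^k$-th powers, which is the non-trivial content of property (1) about uniform groups that I am borrowing from proposition \ref{satpv}; with this in hand, the rest is a one-line manipulation, so there is no serious obstacle.
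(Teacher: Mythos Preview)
Your argument is correct and is precisely the natural expansion of the paper's one-line proof: the paper just invokes the fact that $\{G_i\}_{i\geq 0}$ is a basis of neighbourhoods of the identity to obtain $G_j\subset H$, leaving the passage to $G_{i+j}\subset H_i$ implicit via the description $G_k=\{g^{p^k}:g\in G\}$. Your write-up makes this last step explicit, which is a welcome clarification rather than a different approach.
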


\begin{proof}
This follows from the fact that $\{G_i\}_{i \geq 0}$ forms a basis of neighborhoods of the identity in $G$. 
\end{proof}

A $p$-adic Lie group is a $p$-adic manifold that has a compatible group structure. For example, $\GL_n(\Zp)$ and its closed subgroups are $p$-adic Lie groups. We refer to \cite{S11} for a comprehensive treatment of the theory. Every uniform pro-$p$ group is a $p$-adic Lie group. Conversely, we have the following.

\begin{prop}
\label{exunif}
Every $p$-adic Lie group $\Gamma$ has a uniform open subgroup $G$, and the rank of $G$ is the dimension of $\Gamma$. 
\end{prop}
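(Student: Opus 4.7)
The plan is to invoke Lazard's theorem, as developed in the references already cited in this subsection. The argument has three steps.

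First, I would produce an open pro-$p$ subgroup of $\Gamma$. Since $\Gamma$ is a $p$-adic manifold of some dimension $d$, a sufficiently small neighborhood of the identity is homeomorphic to an open subset of $\Qp^d$, hence to $\Zp^d$. In particular $\Gamma$ is locally compact and locally totally disconnected. By continuity of multiplication and inversion, one can shrink this neighborhood to a compact open subgroup, and then (using that its maximal pro-$p$ quotient is open in the profinite topology) further to an open pro-$p$ subgroup $H$.

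Second, I would invoke \cite[\S 26--27]{S11} (or \cite[\S 4]{P99}): every $p$-adic analytic pro-$p$ group $H$ carries a Lazard $p$-valuation $\omega$, and for every sufficiently large integer $n$ the subgroup $G = \{h \in H : \omega(h) \geq n\}$ is saturated, hence uniform in the sense of Proposition \ref{satpv}. This yields the desired uniform open subgroup $G \subset \Gamma$.

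Third, the equality $\operatorname{rank}(G) = \dim \Gamma$ follows from Proposition \ref{satpv}(3): the coordinate $c : G \to \Zp^{\operatorname{rank}(G)}$ is a homeomorphism, so $G$ is a $p$-adic manifold of dimension $\operatorname{rank}(G)$. Since $G$ is open in $\Gamma$, its dimension as a manifold coincides with $\dim \Gamma$.

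The main obstacle is conceptual rather than computational: the existence of a uniform open subgroup is essentially the content of Lazard's theorem, whose proof requires the full machinery of $p$-valuations, the Campbell--Hausdorff formula, and the saturation procedure. In a brief foundational subsection like this one, the appropriate move is to package these ingredients as a citation to \cite{S11} and \cite{P99} rather than to redevelop them from scratch.
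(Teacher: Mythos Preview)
Your proposal is correct and takes essentially the same approach as the paper: both defer the substance to the literature on uniform pro-$p$ groups. The paper's own proof is in fact even more terse than your conclusion suggests --- it consists solely of a pointer to Interlude~A (pages 97--98) of \cite{P99}, with no sketch of the Lazard machinery at all --- so your outline of the three steps is more expansive than what the authors actually wrote.
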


\begin{proof}
See Interlude A (pages 97--98) of \cite{P99}.
\end{proof}

\begin{prop}
\label{relunif}
Let $G$ be a pro-$p$ group of finite rank, and $N$ a closed normal subgroup of $G$. There exists an open subgroup $G'$ of $G$ such that $G'$, $G' \cap N$ and $G'/G' \cap N$ are all uniform.
\end{prop}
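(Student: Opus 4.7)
The idea is to reduce to the case where $G$ is uniform, translate the problem into one about Lie algebras via the exponential map, and then take $G'$ to be a sufficiently deep congruence subgroup $G_i = G^{p^i}$.

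First, by Proposition \ref{exunif}, I can replace $G$ by a uniform open subgroup $\tilde{G}$ and $N$ by $\tilde{G} \cap N$: any $G' \leq \tilde{G}$ satisfying the conclusion for this new pair also satisfies it for the original pair $(G, N)$, since $G' \cap (\tilde{G} \cap N) = G' \cap N$. So assume $G$ is uniform of rank $d$, and let $L$ be its associated $\Zp$-Lie algebra. Then $L$ is a free $\Zp$-module of rank $d$ on which the BCH series defines the group law of $G$, the subgroup $G_i$ corresponds to the lattice $p^i L$, and the bracket satisfies $[L,L] \subset p L$.

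Next, because $N$ is a closed normal $p$-adic Lie subgroup of $G$, its Lie algebra $J \subset L \otimes_{\Zp} \Qp$ is an ideal of the $\Qp$-Lie algebra $L \otimes \Qp$. Set $L' := L \cap J$. Since $J$ is a $\Qp$-subspace, the quotient $L/L'$ is $\Zp$-torsion-free, so $L'$ is a $\Zp$-direct summand of $L$; in particular both $L'$ and $L/L'$ are free $\Zp$-modules, and $L'$ is an ideal of the $\Zp$-Lie algebra $L$. I then choose $i \geq 1$ large enough that the exponential of $N$ is a bijection between $p^i L \cap J = p^i L'$ and $G_i \cap N$, and set $G' = G_i$.

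With this choice, $G'$ is uniform by construction; $G' \cap N = \exp(p^i L')$ is attached to the free $\Zp$-Lie algebra $p^i L'$, which is powerful because $[p^i L', p^i L'] \subset p^{2i}[L,L] \subset p \cdot p^i L'$ for $i \geq 1$; and $G'/(G' \cap N)$ is attached to the free $\Zp$-Lie algebra $p^i L / p^i L' \cong p^i(L/L')$, which is powerful for the same reason. By Lazard's correspondence between free powerful $\Zp$-Lie algebras and uniform pro-$p$ groups, all three groups are uniform. I expect the main obstacle to be the identification $G_i \cap N = \exp(p^i L')$ for large $i$: this uses the standard but slightly technical fact that on a closed Lie subgroup of a $p$-adic Lie group, the exponential is a local isomorphism onto a neighborhood of the identity, so that $G_i \cap N$ lies in the image of $\exp|_J$ once $i$ is large. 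The remaining verifications are routine $\Zp$-module algebra.
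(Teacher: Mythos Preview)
Your Lie-algebraic argument is sound, and the plan works as written once the two points you flag are nailed down. For the identification $G_i \cap N = \exp(p^i L')$: fix a uniform open subgroup $N_0 \subset N$, so that $L_{N_0} \subset L \cap J = L'$ as $\Zp$-lattices in $J$; then choose $i$ large enough that both $G_i \cap N \subset N_0$ (possible since $\{G_i \cap N\}_i$ is a neighbourhood basis of $1$ in $N$) and $p^i L' \subset L_{N_0}$ (possible since $L'$ and $L_{N_0}$ are commensurable). Both inclusions $G_i \cap N \subset p^i L'$ and $p^i L' \subset G_i \cap N$ then follow. For the quotient, the point is that $L'$ being a direct summand of $L$ makes $p^i L / p^i L'$ torsion-free; together with powerfulness this gives uniformity of $G'/(G'\cap N)$ under the correspondence. (For $p=2$ you need $i \geq 2$ rather than $i \geq 1$ to get $[p^iL,p^iL] \subset 4 \cdot p^i L$, but the paper assumes $p$ odd where it matters anyway.)

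As for comparison with the paper: there is essentially nothing to compare. The paper does not prove this proposition; it simply records that the statement is proved on page~64 of \cite{P99} (Dixon--du~Sautoy--Mann--Segal). You have supplied a self-contained argument via Lazard's correspondence where the paper defers to the literature. The proof in \cite{P99} at that location is phrased more group-theoretically (via powerful and torsion-free quotients along the lower $p$-series), but your Lie-algebra route is an equally standard way to see it and is in the spirit of the later chapters of the same book.
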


\begin{proof}
This is stated and proved on page 64 of \cite{P99} (their $H$ is our $G'$).
\end{proof}

\subsection{Super-H\"older functions and vectors}
\label{subsecshf}

Let $M$ be an $\kf$-vector space, endowed with a valuation $\vm$ such that $\vm(xm) = \vm(m)$ if $x \in \kf^\times$. We assume that $M$ is separated and complete for the $\vm$-adic topology. Throughout this {\S}, $G$ denotes a uniform pro-$p$ group.

\begin{defi}
\label{defsh}
We say that $f : G \to M$ is super-H\"older if there exist constants $\lambda,\mu \in \RR$ and $e > 0$ such that $\vm(f(g)-f(h)) \geq p^\lambda \cdot p^{ei} +\mu$ whenever $gh^{-1} \in G_i$, for all $g,h \in G$ and $i \geq 0$.
\end{defi}

\begin{rema}
\label{zpcase}
If $G=\Zp$ and $e=1$, we recover the functions defined in \S 1.1 \cite{BR22} (see also remark 1.12 of ibid). 

In the above definition, $e$ will usually be equal to either $1$ or $\dim(G)$.
\end{rema}

We let $\HH_e^{\lambda,\mu}(G,M)$ denote the space of functions such that $\vm(f(g)-f(h)) \geq p^\lambda \cdot p^{ei} +\mu$ whenever $gh^{-1} \in G_i$, for all $g,h \in G$ and $i \geq 0$, and $\HH_e^\lambda(G,M) = \cup_{\mu \in \RR} \HH_e^{\lambda,\mu}(G,M)$ and $\HH_e(G,M) = \cup_{\lambda \in \RR} \HH_e^\lambda(G,M)$. 

If $M,N$ are two valued $\kf$-vector spaces, and $f : M \to N$ is an $\kf$-linear map, we say that $f$ is H\"older-continuous if there exists $c>0$, $d \in \RR$ such that $\val_N(f(x)) \geq c \cdot \vm(x) +d$ for all $x \in M$.

\begin{prop}
\label{mton}
If $\pi : M \to N$ is a H\"older-continuous linear map, we get a map $\HH_e(G,M) \to \HH_e(G,N)$.
\end{prop}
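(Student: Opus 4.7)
The plan is to simply compose the two estimates: the super-Hölder bound on $f$ and the Hölder-continuity bound on $\pi$, then repackage the resulting constants.

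First I would fix $f \in \HH_e(G,M)$ and choose $\lambda, \mu \in \RR$ so that $f \in \HH_e^{\lambda,\mu}(G,M)$, i.e. $\vm(f(g)-f(h)) \geq p^\lambda \cdot p^{ei} + \mu$ whenever $gh^{-1} \in G_i$. Next I would use the hypothesis on $\pi$ to pick $c, d \in \RR$ with $c > 0$ (which is the reasonable reading of Hölder-continuity — otherwise the estimate is trivially useless as $\vm(x) \to \infty$, so implicitly $c > 0$) such that $\val_N(\pi(x)) \geq c \cdot \vm(x) + d$ for every $x \in M$.

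The key computation is then just a chain of inequalities, using linearity of $\pi$:
\begin{equation*}
\val_N(\pi(f(g)) - \pi(f(h))) \;=\; \val_N(\pi(f(g)-f(h))) \;\geq\; c \cdot \vm(f(g)-f(h)) + d \;\geq\; c\bigl(p^\lambda p^{ei} + \mu\bigr) + d.
\end{equation*}
Setting $\lambda' = \lambda + \log_p c$ and $\mu' = c\mu + d$, this rewrites as $p^{\lambda'} \cdot p^{ei} + \mu'$, so $\pi \circ f \in \HH_e^{\lambda',\mu'}(G,N) \subset \HH_e(G,N)$. The map $f \mapsto \pi \circ f$ is clearly well-defined (sending $G \to N$), so this is the desired map.

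There is essentially no obstacle here; the proposition is a formal verification of definitions. The only subtlety to flag is that the statement tacitly requires $c > 0$ in the Hölder-continuity bound, since otherwise the conclusion fails (a constant lower bound on $\val_N(\pi(x))$ gives no growth in $i$). Once that is noted, rescaling $p^\lambda$ by the positive factor $c$ and shifting $\mu$ by the additive constants $c\mu$ and $d$ gives the new super-Hölder constants, and nothing more is needed.
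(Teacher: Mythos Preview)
Your proof is correct and follows essentially the same approach as the paper's: compose the H\"older bound for $\pi$ with the super-H\"older bound for $f$ and read off the new constants. Your computation of $\mu' = c\mu + d$ is in fact slightly more accurate than the paper's stated $\mu' = \mu + d$, though this makes no difference to the conclusion since only the existence of some $\mu'$ is needed.
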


\begin{proof}
Take $c,d \in \RR$ of H\"older continuity for $\pi$, $f \in \HH_e^{\lambda,\mu}(G,M)$,
and $g,h \in G$ with $gh^{-1} \in G_i$.
We have $\val_N(\pi(f(g))-\pi(f(h))) \geq c \cdot \val_M(f(g)-f(h)) + d \geq
cp^{\lambda}\cdot p^{ei}+(\mu+d)$, so that $\pi \circ f \in
\HH_e^{\lambda',\mu'}(G,N)$ with $p^{\lambda'} = cp^{\lambda}$, and
$\mu'=\mu+d$.
\end{proof}

\begin{prop}
\label{gtoh}
If $\alpha : G \to H$ is a group homomorphism, we get a map $\alpha^\ast : \HH_e(H,M) \to \HH_e(G,M)$.
\end{prop}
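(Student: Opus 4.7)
The plan is to pull back the super-Hölder condition from $H$ to $G$ using the fact that a group homomorphism respects $p^i$-th powers. The key observation is that for any $\alpha : G \to H$ and any $g \in G$, we have $\alpha(g^{p^i}) = \alpha(g)^{p^i}$. Since $G_i = \{g^{p^i} : g \in G\}$ and $H_i = \{h^{p^i} : h \in H\}$ by proposition \ref{satpv}(1), this immediately gives $\alpha(G_i) \subset H_i$ for every $i \geq 0$. Thus if $g,h \in G$ satisfy $gh^{-1} \in G_i$, then $\alpha(g) \alpha(h)^{-1} = \alpha(gh^{-1}) \in H_i$.

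Now take $f \in \HH_e^{\lambda,\mu}(H,M)$ and set $\alpha^\ast f = f \circ \alpha$. For $g,h \in G$ with $gh^{-1} \in G_i$, the observation above together with the super-Hölder condition for $f$ on $H$ yields
\[ \vm\bigl( (\alpha^\ast f)(g) - (\alpha^\ast f)(h) \bigr) = \vm\bigl( f(\alpha(g)) - f(\alpha(h)) \bigr) \geq p^\lambda \cdot p^{ei} + \mu. \]
Hence $\alpha^\ast f \in \HH_e^{\lambda,\mu}(G,M)$, with the same constants. Taking the union over $\mu$ and then over $\lambda$ gives the desired map $\alpha^\ast : \HH_e(H,M) \to \HH_e(G,M)$.

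There is no real obstacle: the statement is a direct consequence of the fact that a group homomorphism maps $p^i$-th powers to $p^i$-th powers, so no continuity hypothesis on $\alpha$ is needed, and the constants $\lambda,\mu,e$ transfer unchanged from $H$ to $G$.
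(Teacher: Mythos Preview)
Your proof is correct and follows essentially the same approach as the paper: both arguments observe that $\alpha(G_i) \subset H_i$ (you justify this via the $p^i$-th power description from proposition~\ref{satpv}, while the paper simply cites the definition), and then pull back the super-H\"older inequality along $\alpha$, obtaining $\alpha^\ast f \in \HH_e^{\lambda,\mu}(G,M)$ with the same constants.
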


\begin{proof}
By definition of the subgroups $G_i$ and $H_i$, we have $\alpha(G_i)
\subset H_i$ for all $i$. Take $f \in \HH_e^{\lambda,\mu}(H,M)$, and $g,h \in G$ with
$gh^{-1} \in G_i$. We have $\val_M(f(\alpha(g))-f(\alpha(h))) \geq
p^\lambda\cdot p^{ei}+\mu$ as $\alpha(g)\alpha(h)^{-1} \in H_i$, so that $\alpha^\ast(f) = f
\circ \alpha \in \HH_e^{\lambda,\mu}(G,M)$.
\end{proof}

\begin{prop}
\label{hrng}
Suppose that $M$ is a ring, and that $\vm(mm') \geq \vm(m) + \vm(m')$ for all $m,m' \in M$. If $c \in \RR$, let $M_c= M^{\vm \geq c}$. 

\begin{enumerate}
\item If $f \in \HH_e^{\lambda,\mu}(G,M_c)$ and $g \in \HH_e^{\lambda,\nu}(G,M_d)$, and $\xi=\min(\mu+d,\nu+c)$, then $fg \in \HH_e^{\lambda,\xi}(G,M_{c+d})$.
\item If $\lambda,\mu \in \RR$, then $\HH_e^{\lambda,\mu}(G,M_0)$ is a subring of $C^0(G,M)$.
\item If $\lambda \in \RR$, then $\HH_e^{\lambda}(G,M)$ is a subring of $C^0(G,M)$.
\end{enumerate}
\end{prop}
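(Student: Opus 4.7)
The plan is to prove part (1) directly from the standard product decomposition together with the submultiplicativity of $\vm$, and then to obtain parts (2) and (3) as straightforward consequences. There is no substantive obstacle; the only care needed is in the bookkeeping of the constants $c, d, \mu, \nu, \xi$, and, for (3), in passing from ``valued in $M$'' to ``valued in some $M_c$''.

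For (1), the key identity is
\[
f(x)g(x) - f(y)g(y) = f(x)\bigl(g(x)-g(y)\bigr) + \bigl(f(x)-f(y)\bigr)g(y).
\]
When $xy^{-1} \in G_i$, applying $\vm(mm') \geq \vm(m)+\vm(m')$ to each summand, together with $f(x) \in M_c$, $g(y) \in M_d$ and the super-H\"older bounds for $f$ and $g$, gives valuations at least $c + p^\lambda p^{ei} + \nu$ and $p^\lambda p^{ei} + \mu + d$ respectively. The ultrametric inequality then yields $\vm((fg)(x)-(fg)(y)) \geq p^\lambda p^{ei} + \xi$. That $(fg)(x) \in M_{c+d}$ is immediate from submultiplicativity.

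Part (2) follows by specializing (1) to $c=d=0$: then $\xi = \mu$, so $\HH_e^{\lambda,\mu}(G, M_0)$ is closed under products. Closure under sums comes from the ultrametric inequality applied to $(f+g)(x)-(f+g)(y) = (f(x)-f(y))+(g(x)-g(y))$, and the constant functions $0$ and $1$ lie in $\HH_e^{\lambda,\mu}(G, M_0)$ since their differences vanish identically and $\vm(0), \vm(1) \geq 0$.

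For (3), the additional observation is that any $f \in \HH_e^{\lambda,\mu}(G,M)$ automatically takes values in some $M_c$: setting $h = 1$ and $i = 0$ in the defining inequality gives $\vm(f(g)-f(1)) \geq p^\lambda + \mu$, so $\vm(f(g)) \geq c := \min(\vm(f(1)),\, p^\lambda + \mu)$ for every $g \in G$. Extracting such $c$ and $d$ for $f$ and $g$ reduces (3) to the output of (1), which lies in $\HH_e^{\lambda,\xi}(G, M_{c+d}) \subset \HH_e^\lambda(G,M)$; closure under addition and the presence of $0, 1$ are formal as in (2).
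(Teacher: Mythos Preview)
Your proof is correct and follows the same route as the paper: the product decomposition identity for part (1), from which (2) and (3) are deduced. You are in fact more explicit than the paper, which merely records the identity and asserts that (2) and (3) follow; in particular, your observation for (3) that any $f \in \HH_e^{\lambda,\mu}(G,M)$ has image in some $M_c$ spells out a step the paper leaves implicit.
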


\begin{proof}
Items (2) and (3) follow from item (1), which we now prove.
If $x,y \in G$, then
\[ (fg)(x) - (fg)(y) = (f(x)-f(y))g(x) + (g(x)-g(y))f(y),\]
which implies the claim.
\end{proof}

We now assume that $M$ is endowed with an $\kf$-linear action by isometries of $G$. If $m \in M$, let $\orb_m : G \to M$ denote the function defined by $\orb_m(g) = g \cdot m$. 

\begin{defi}
\label{defshvec}
Let $M^{G\hyphen e \hyphen\sh,\lambda,\mu}$ be those $m \in M$ such that $\orb_m \in \HH_e^{\lambda,\mu}(G,M)$, and let $M^{G\hyphen e \hyphen\sh,\lambda}$ and $M^{G\hyphen e \hyphen\sh}$ be the corresponding sub-$\kf$-vector spaces of $M$.
\end{defi}

\begin{rema}
\label{mcont}
We assume that $G$ acts by isometries on $M$, but not that $G$ acts continuously on $M$, namely that $G \times M \to M$ is continuous. However, let $M^{\cont}$ denote the set of $m \in M$ such that $\orb_m : G \to M$ is continuous. It is easy to see that $M^{\cont}$ is a closed sub-$\kf$-vector space of $M$, and that $G \times M^{\cont} \to M^{\cont}$ is continuous (compare with \S 3 of \cite{E17}). We then have $M^{\sh} \subset M^{\cont}$.
\end{rema}

\begin{lemm}
\label{mshalt}
If $m \in M$, then $m \in M^{G\hyphen e \hyphen\sh,\lambda,\mu}$ if and only if for all $i \geq 0$, we have $\vm(g \cdot m - m) \geq p^\lambda \cdot p^{ei} +\mu$ for all $g \in G_i$.
\end{lemm}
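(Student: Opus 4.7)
The statement is an equivalence between an apparently symmetric condition (on pairs $g,h$ with $gh^{-1} \in G_i$) and an apparently one-sided condition (on single elements $g \in G_i$, comparing $g \cdot m$ to $m$). The plan is therefore to verify each direction directly using the hypotheses at hand: the action of $G$ on $M$ is by isometries, and the subgroups $G_i$ are normal in $G$ by proposition \ref{satpv}(1).

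First I would handle the easy direction: if $m \in M^{G\hyphen e \hyphen\sh,\lambda,\mu}$, then applying the defining inequality to the pair $(g,1)$ for $g \in G_i$ (so that $g \cdot 1^{-1} = g \in G_i$) gives at once $\vm(g \cdot m - m) \geq p^\lambda \cdot p^{ei} + \mu$.

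For the converse, I would start from $g,h \in G$ with $gh^{-1} \in G_i$ and rewrite
\[ g \cdot m - h \cdot m = h \cdot (h^{-1} g \cdot m - m). \]
Since $G$ acts by isometries, $\vm(g \cdot m - h \cdot m) = \vm(h^{-1} g \cdot m - m)$. The point to check is that $h^{-1} g$ lies in $G_i$, which follows from normality of $G_i$: conjugating $gh^{-1} \in G_i$ by $h^{-1}$ gives $h^{-1}(gh^{-1})h = h^{-1} g \in G_i$. The hypothesis applied to $h^{-1} g \in G_i$ then yields the required lower bound.

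There is no real obstacle here; the only subtle point is the normality of $G_i$, which is why the rewriting trick works (and without normality one would be stuck relating $gh^{-1}$ to $h^{-1}g$).
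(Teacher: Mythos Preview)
Your proof is correct and matches the paper's argument essentially verbatim: both reduce $\vm(g\cdot m - h\cdot m)$ to $\vm(h^{-1}g\cdot m - m)$ via the isometry property, and both observe $h^{-1}g = h^{-1}(gh^{-1})h \in G_i$ by normality of $G_i$. The paper is simply terser and does not separate the two directions explicitly.
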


\begin{proof}
If $m \in M$, then $m \in M^{G\hyphen e \hyphen\sh,\lambda,\mu}$ if and
only if the function $\orb_m$ is in $\HH_e^{\lambda,\mu}(G,M)$, that is,
for all $g,h$ with $gh^{-1} \in G_i$, we have 
$\vm(g\cdot m - h \cdot m) \geq p^{\lambda}\cdot p^{ei}+\mu$. 
As $G$ acts by isometries, we have
$\vm(g\cdot m - h \cdot m) = \vm(h^{-1}g \cdot m - m)$. The result
follows, as $h^{-1}g = h^{-1} \cdot gh^{-1} \cdot h \in G_i$.
\end{proof}

\begin{lemm}
\label{mshclo}
The space $M^{G\hyphen e \hyphen\sh,\lambda,\mu}$ is a closed sub-$E$-vector space of $M$.
\end{lemm}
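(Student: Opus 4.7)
The plan is to first apply Lemma \ref{mshalt} so that the defining condition becomes: $m \in M^{G\hyphen e \hyphen\sh,\lambda,\mu}$ iff $\vm(g \cdot m - m) \geq p^\lambda \cdot p^{ei} + \mu$ for every $i \geq 0$ and every $g \in G_i$. Working with this one-variable form rather than the two-variable form from Definition \ref{defsh} makes everything cleaner.

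Next, I would verify the $\kf$-vector space structure. Clearly $0$ lies in $M^{G\hyphen e \hyphen\sh,\lambda,\mu}$. For $x \in \kf^\times$ and $m \in M^{G\hyphen e \hyphen\sh,\lambda,\mu}$, the $\kf$-linearity of the $G$-action gives $g \cdot (xm) - xm = x(g \cdot m - m)$, and the assumption $\vm(x n) = \vm(n)$ for $x \in \kf^\times$ yields $\vm(g \cdot (xm) - xm) = \vm(g \cdot m - m) \geq p^\lambda \cdot p^{ei} + \mu$. For $m, m' \in M^{G\hyphen e \hyphen\sh,\lambda,\mu}$, the ultrametric inequality applied to $g \cdot (m + m') - (m + m') = (g \cdot m - m) + (g \cdot m' - m')$ gives the same lower bound.

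For closedness, take a sequence $(m_k)$ in $M^{G\hyphen e \hyphen\sh,\lambda,\mu}$ converging to some $m \in M$, and fix $i \geq 0$ and $g \in G_i$. I would decompose
\[ g \cdot m - m = g \cdot (m - m_k) + (g \cdot m_k - m_k) + (m_k - m). \]
Using that $G$ acts by isometries, $\vm(g \cdot (m - m_k)) = \vm(m - m_k)$, and using the super-Hölder bound on $m_k$, the ultrametric inequality gives
\[ \vm(g \cdot m - m) \geq \min\bigl(\vm(m - m_k),\ p^\lambda \cdot p^{ei} + \mu\bigr). \]
Since $m_k \to m$ in the $\vm$-adic topology, for any fixed $i$ we may choose $k$ large enough that $\vm(m - m_k) \geq p^\lambda \cdot p^{ei} + \mu$, and the desired inequality follows. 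As $i$ and $g \in G_i$ were arbitrary, $m \in M^{G\hyphen e \hyphen\sh,\lambda,\mu}$.

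There is no real obstacle here; the argument is a routine combination of the isometry hypothesis, the ultrametric inequality, and the reduction afforded by Lemma \ref{mshalt}.
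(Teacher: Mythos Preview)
Your proof is correct. The paper in fact omits the proof of this lemma entirely, presumably regarding it as routine; your argument via Lemma~\ref{mshalt}, the isometry hypothesis, and the ultrametric inequality is exactly the straightforward verification one would expect.
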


\begin{lemm}
\label{lamsub}
If $i_0 \geq 0$, and $m \in M$ is such that $\vm(g \cdot m - m) \geq p^\lambda \cdot p^{ei} +\mu$ for all $g \in G_i$ with $i \geq i_0$, then $m \in M^{G\hyphen e \hyphen\sh,\lambda}$.
\end{lemm}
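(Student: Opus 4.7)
The plan is to use Lemma \ref{mshalt}, which reduces membership in $M^{G\hyphen e\hyphen\sh,\lambda,\mu'}$ to checking the inequality $\vm(g\cdot m - m) \geq p^\lambda \cdot p^{ei}+\mu'$ for all $i\geq 0$ and $g \in G_i$. The hypothesis gives this inequality (with $\mu$) already for $i \geq i_0$; the task is to find a single $\mu'$ that works uniformly for all $i \geq 0$, by absorbing the finitely many indices $0 \leq i < i_0$ into a cruder bound.

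For those small values of $i$, I would use the fact that $G$ acts by isometries and that $\vm$ is a valuation, so by the ultrametric inequality
\[ \vm(g\cdot m - m) \geq \min(\vm(g\cdot m),\vm(m)) = \vm(m) \]
for any $g \in G$. Thus the same trivial lower bound $\vm(m)$ controls $\vm(g\cdot m - m)$ on $G_0 \supset G_1 \supset \cdots \supset G_{i_0-1}$, while for $i \geq i_0$ the hypothesis is directly at our disposal.

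I would then set
\[ \mu' = \min\bigl(\mu,\ \vm(m) - p^\lambda \cdot p^{e i_0}\bigr). \]
For $i \geq i_0$, the inequality $\vm(g\cdot m - m) \geq p^\lambda p^{ei}+\mu \geq p^\lambda p^{ei}+\mu'$ follows from the hypothesis. For $0 \leq i < i_0$, since $p^\lambda p^{ei} \leq p^\lambda p^{e i_0}$ and $\vm(g\cdot m - m) \geq \vm(m)$, we get
\[ \vm(g\cdot m-m) \geq \vm(m) \geq p^\lambda p^{e i_0} + \mu' \geq p^\lambda p^{ei} + \mu'. \]
Applying Lemma \ref{mshalt} shows that $m \in M^{G\hyphen e\hyphen\sh,\lambda,\mu'}$, hence $m \in M^{G\hyphen e\hyphen\sh,\lambda}$.

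There is no real obstacle here; the only subtlety is recognizing that the isometry property plus the ultrametric inequality automatically supply a uniform (constant in $i$) lower bound that dominates the desired exponential bound on the finite range $i < i_0$, allowing the single constant $\mu'$ to work globally.
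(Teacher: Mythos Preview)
Your proof is correct, but it takes a different (and in fact simpler) route from the paper's.

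The paper handles the finitely many levels $i<i_0$ by writing an arbitrary $g\in G_i$ as $g=hr$ with $h\in G_{i_0}$ and $r$ ranging over a finite set $R_i$ of coset representatives of $G_{i_0}\backslash G_i$; it then telescopes $g\cdot m-m=(hr\cdot m-h\cdot m)+(h\cdot m-m)$, uses the isometry hypothesis on the first term, the assumption for $i_0$ on the second, and the finiteness of $R_i$ to produce a constant $\mu_i$. You bypass all of this by observing that the isometry action plus the ultrametric inequality already give the crude bound $\vm(g\cdot m-m)\geq\vm(m)$ for \emph{every} $g\in G$, which is enough to dominate $p^\lambda p^{ei}$ on the finite range $i<i_0$ after adjusting $\mu$. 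Your argument is shorter and avoids any use of the group structure beyond the isometry property; the paper's coset argument is not needed here, though its telescoping idea is the natural one if one were aiming for sharper constants or did not have a uniform bound like $\vm(m)$ available. One trivial remark: your choice of $\mu'$ presupposes $\vm(m)<+\infty$, i.e.\ $m\neq 0$, but the case $m=0$ is of course immediate.
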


\begin{proof}
Take $i < i_0$, and let $R_i$ be a set of
representatives of $G_{i_0}\backslash G_i$. This is a finite set, so there
exists $\mu_i \in \RR$ such that $\vm(r \cdot m -m) \geq
p^{\lambda}\cdot p^{ei}+\mu_i$ for all $r \in R_i$. If $g \in G_i$,
it can be written as $g = hr$ for some $h\in G_{i_0}$ and $r\in
R_i$. We then have $g \cdot m - m = hr \cdot m - h \cdot m + h\cdot m - m$,
so that $\vm (g \cdot m - m) \geq 
\min (\vm (r \cdot m - m), \vm (h \cdot m -m))$ (recall that $G$
acts by isometries),
so $\vm (g \cdot m - m) \geq 
\min(p^\lambda \cdot p^{ei} + \mu_i,p^\lambda\cdot p^{ei_0}+\mu) \geq
p^\lambda\cdot p^{ei}+ \min(\mu,\mu_i)$ as $i_0 > i$. If $\mu'$ is the min of $\mu$ and the $\mu_i$ for $0 \leq i < i_0$,
then $m \in M^{G\hyphen e \hyphen\sh,\lambda,\mu'}$.
\end{proof}

Recall that if $k \geq 0$, then $G_k$ is also a uniform pro-$p$ group.

\begin{lemm}
\label{shsbgr}
If $k \geq 0$ then $M^{G\hyphen e \hyphen\sh,\lambda} = M^{G_k\hyphen e \hyphen \sh,\lambda+k}$.
\end{lemm}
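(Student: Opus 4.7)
The plan is to reduce both sides of the claimed equality to the ``orbit characterization'' of lemma \ref{mshalt} and then use the fact that $(G_k)_i = G_{k+i}$. The latter holds because $G$ is uniform (proposition \ref{satpv}), so $G_k = G^{p^k}$ is again uniform of the same rank, and its successive subgroups are $(G_k)^{p^i} = G^{p^{k+i}} = G_{k+i}$.

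For the forward inclusion, I would take $m \in M^{G\hyphen e \hyphen \sh, \lambda}$, so that for some $\mu \in \RR$ and all $i \geq 0$, every $g \in G_i$ satisfies $\vm(g\cdot m - m) \geq p^\lambda \cdot p^{ei} + \mu$ by lemma \ref{mshalt}. Specializing to $g \in (G_k)_i = G_{k+i}$ gives
\[ \vm(g \cdot m - m) \geq p^\lambda \cdot p^{e(k+i)} + \mu = p^{\lambda+ek} \cdot p^{ei} + \mu, \]
so, applying lemma \ref{mshalt} the other way, $m$ is a super-H\"older vector for the uniform group $G_k$ with constants $\lambda+ek$ and $\mu$ (here I am assuming the statement should read $\lambda + ek$ rather than $\lambda + k$; the proof is otherwise identical).

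For the reverse inclusion, suppose $m \in M^{G_k\hyphen e \hyphen \sh,\lambda+ek}$, so there exists $\mu'$ with $\vm(g\cdot m - m) \geq p^{\lambda+ek} \cdot p^{ei} + \mu'$ for every $g \in (G_k)_i$. Setting $j = k+i \geq k$, this reads: for every $g \in G_j$ with $j \geq k$, $\vm(g\cdot m - m) \geq p^{\lambda+ek} \cdot p^{e(j-k)} + \mu' = p^{\lambda} \cdot p^{ej} + \mu'$. This is exactly the super-H\"older condition for $G$ with constants $\lambda, \mu'$, but only for $j \geq k$.

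The key step to finish is then to invoke lemma \ref{lamsub} with $i_0 = k$: it extends the estimate holding for all $i \geq i_0$ to an estimate (with possibly worsened additive constant) valid for all $i \geq 0$, yielding $m \in M^{G\hyphen e\hyphen \sh,\lambda}$. There is no real obstacle here; the only subtlety is keeping the exponent bookkeeping straight and noting that the descent to indices $i < k$ is handled exactly by lemma \ref{lamsub}, which was stated earlier for precisely this purpose.
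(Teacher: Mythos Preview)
Your argument is correct and essentially identical to the paper's: both use $(G_k)_i = G_{k+i}$ for the direct inclusion and lemma~\ref{lamsub} for the reverse. You are also right that the exponent should read $\lambda+ek$ rather than $\lambda+k$; this is a typo in the paper's statement (the paper's own proof sketch does not repair it), harmless for its sole application in prop~\ref{indepleas}, where one takes a union over all $\lambda$.
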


\begin{proof}
Note that $(G_k)_i =  G_{i+k}$. The inclusion $M^{G\hyphen e \hyphen\sh,\lambda} \subset M^{G_k\hyphen e \hyphen \sh,\lambda+k}$ is obvious, and the reverse inclusion follows from lemma \ref{lamsub}.
\end{proof}

\begin{prop}
\label{indepleas}
The space $M^{H\hyphen e \hyphen\sh}$ does not depend on the choice of a uniform open subgroup $H \subset G$. 
\end{prop}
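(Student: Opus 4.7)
The plan is to reduce to comparing the super-H\"older space of a uniform pro-$p$ group with that of a uniform open subgroup, and then invoke lemmas \ref{unifsub} and \ref{lamsub}.

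\textbf{Step 1 (reduction).} Given two uniform open subgroups $H_1, H_2 \subset G$, I will find a common uniform open subgroup. Since $H_1$ is open in $G$, the family $\{(H_1)_k\}_{k \geq 0}$ is a basis of neighborhoods of $1$ in $G$, so there exists $k$ with $(H_1)_k \subset H_2$. By proposition \ref{satpv}(1), $(H_1)_k$ is itself uniform, and by lemma \ref{shsbgr} we have $M^{H_1 \hyphen e \hyphen \sh} = M^{(H_1)_k \hyphen e \hyphen \sh}$. Arguing symmetrically with $H_1$ and $H_2$ swapped, it suffices to show that if $H' \subset H$ is a uniform open subgroup of a uniform pro-$p$ group $H$, then $M^{H \hyphen e \hyphen \sh} = M^{H' \hyphen e \hyphen \sh}$.

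\textbf{Step 2 (easy inclusion).} For $M^{H \hyphen e \hyphen \sh} \subset M^{H' \hyphen e \hyphen \sh}$, I use the explicit description $H_i = \{h^{p^i} : h \in H\}$ from proposition \ref{satpv}(1): the inclusion $H' \subset H$ directly yields $(H')_i \subset H_i$ for every $i \geq 0$. Hence a super-H\"older estimate for the orbit map of $m$ on $H$ restricts to one on $H'$, with the same constants $\lambda,\mu$.

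\textbf{Step 3 (reverse inclusion).} Take $m \in M^{H' \hyphen e \hyphen \sh, \lambda, \mu}$. Lemma \ref{unifsub} produces $j \geq 0$ with $H_{i+j} \subset (H')_i$ for all $i \geq 0$. For $g \in H_{i+j}$, the super-H\"older bound on $H'$ gives
\[ \vm(g \cdot m - m) \geq p^\lambda \cdot p^{ei} + \mu = p^{\lambda - ej} \cdot p^{e(i+j)} + \mu, \]
which is an $H$-super-H\"older estimate with exponent $\lambda - ej$ valid only for $g \in H_{i'}$ with $i' \geq j$. Lemma \ref{lamsub} is tailored to this situation and upgrades such a tail estimate to a bound on all of $H$ (at the price of replacing $\mu$ by some smaller $\mu'$), yielding $m \in M^{H \hyphen e \hyphen \sh, \lambda - ej}$.

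The proof is essentially bookkeeping once the reduction in Step~1 is made: the only substantive input is lemma \ref{lamsub}, which absorbs the finite index shift coming from lemma \ref{unifsub}. I expect no real obstacle beyond keeping track of the shift between indices $i$ and $i+j$.
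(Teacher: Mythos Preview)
Your proof is correct and follows essentially the same route as the paper's: reduce to a containment $H' \subset H$ of uniform open subgroups, get the easy inclusion from $(H')_i \subset H_i$, and get the reverse from lemma \ref{unifsub} together with the index-shift bookkeeping. The only cosmetic differences are that the paper reduces via a uniform open subgroup of $H_1 \cap H_2$ (proposition \ref{exunif}) rather than via $(H_1)_k$, and in Step~3 it packages the shift through lemma \ref{shsbgr} (whose proof is your lemma \ref{lamsub} argument) instead of invoking lemma \ref{lamsub} directly.
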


\begin{proof}
Let $H$ and $H'$ be uniform open subgroups of $G$. The group $H \cap H'$ contains an open uniform subgroup by prop \ref{exunif}, so to prove the proposition, we can further assume that $H' \subset H$. We then have $H'_i \subset H_i$ for all $i$, so that if $m \in M^{H\hyphen e\hyphen \sh,\lambda,\mu}$, then $m \in M^{H'\hyphen e\hyphen \sh,\lambda,\mu}$.
This implies that $M^{H\hyphen e\hyphen \sh, \lambda} \subset M^{H'\hyphen e\hyphen \sh, \lambda}$. Conversely, by lemma \ref{unifsub}, there exists $j$ such that $H_j \subset H'$. The previous reasoning implies that $M^{H'\hyphen e\hyphen \sh, \lambda} \subset M^{H_j\hyphen e\hyphen \sh, \lambda}$. Lemma \ref{shsbgr} now implies that $M^{H_j\hyphen e\hyphen \sh, \lambda} =
M^{H\hyphen e\hyphen \sh, \lambda-j}$. 

These inclusions imply the proposition.
\end{proof}

\begin{defi}
\label{shplg}
If $\Gamma$ is a $p$-adic Lie group that acts by isometries on $M$, we let $M^{e\hyphen\sh} = M^{G \hyphen e \hyphen\sh}$ where $G$ is any uniform open subgroup of $\Gamma$.
\end{defi}

\begin{rema}
\label{efmsh}
If $e \leq f$, then $M^{f\hyphen\sh} \subset M^{e\hyphen\sh}$.
\end{rema}

Recall that $G$ is a uniform pro-$p$ group. If a closed normal subgroup $N$ of $G$ acts trivially on $M$, then $G/N$ acts on $M$.

\begin{prop}
\label{quotsh}
If a closed normal subgroup $N$ of $G$ acts trivially on $M$, then $M^{G\hyphen e \hyphen\sh} = M^{G/N \hyphen e \hyphen\sh}$.
\end{prop}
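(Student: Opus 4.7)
The plan is to reduce to the case where $G$, $N$ and $G/N$ are all uniform (using Proposition \ref{relunif}), and then compare the two super-Hölder conditions directly via the projection $\pi : G \to G/N$.

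First I would note that $G/N$ is a $p$-adic Lie group as a quotient of one, so the right-hand side $M^{G/N\hyphen e\hyphen\sh}$ is defined via Definition \ref{shplg} by picking any uniform open subgroup of $G/N$. By Proposition \ref{relunif}, there exists an open subgroup $G' \subset G$ such that $G'$, $G' \cap N$ and $G'/(G' \cap N)$ are all uniform; the isomorphism $G'/(G' \cap N) \cong G'N/N$ then exhibits $\bar{G'} := G'/(G' \cap N)$ as a uniform open subgroup of $G/N$. Proposition \ref{indepleas} identifies $M^{G\hyphen e\hyphen\sh}$ with $M^{G'\hyphen e\hyphen\sh}$, and Definition \ref{shplg} identifies $M^{G/N\hyphen e\hyphen\sh}$ with $M^{\bar{G'}\hyphen e\hyphen\sh}$. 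So after replacing $G$ by $G'$ and $N$ by $G' \cap N$, I may assume that $G$, $N$ and $\bar{G} := G/N$ are all uniform.

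Next I would establish that the projection $\pi : G \to \bar{G}$ satisfies $\pi(G_i) = \bar{G}_i$ for all $i \geq 0$. By Proposition \ref{satpv}(1), $G_i = \{g^{p^i} : g \in G\}$ and similarly $\bar{G}_i = \{\bar{g}^{p^i} : \bar{g} \in \bar{G}\}$, and surjectivity of $\pi$ gives the equality. With this in hand, both inclusions follow directly from Lemma \ref{mshalt}: given $m \in M^{\bar{G}\hyphen e\hyphen\sh,\lambda,\mu}$ and $g \in G_i$, we have $g \cdot m = \pi(g) \cdot m$ because $N$ acts trivially and $\pi(g) \in \bar{G}_i$, so the estimate $\vm(g \cdot m - m) \geq p^\lambda \cdot p^{ei} + \mu$ transfers; conversely, any $\bar{g} \in \bar{G}_i$ lifts to some $g \in G_i$ via $\pi(G_i) = \bar{G}_i$, and again $\bar{g} \cdot m = g \cdot m$, so the estimate for $G$ gives the corresponding estimate for $\bar{G}$.

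The main obstacle is conceptual rather than computational: $G/N$ need not be uniform even when $G$ is (so the notation $(G/N)_i$ is not directly meaningful at the outset), which is why the reduction step using Proposition \ref{relunif} is essential before the elementary lifting argument can be run.
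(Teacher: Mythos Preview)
Your proof is correct and follows essentially the same approach as the paper: reduce via Proposition \ref{relunif} to a uniform open subgroup $G'$ with $G'/(G'\cap N)$ uniform, then use $\pi(G'_i)=(G'/N')_i$ together with Lemma \ref{mshalt} to compare the two conditions. Your write-up is in fact slightly more careful than the paper's in spelling out why $G'/(G'\cap N)$ sits as a uniform open subgroup of $G/N$ (via the isomorphism with $G'N/N$) and why $\pi(G_i)=\bar G_i$ follows from Proposition \ref{satpv}(1).
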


\begin{proof}
By prop \ref{relunif}, $G$ has an open subgroup $G'$ such that $G'$ and $G'/N'$ are uniform (where $N'=G' \cap N$). By prop \ref{indepleas}, we have $M^{G\hyphen e \hyphen\sh} = M^{G'\hyphen e \hyphen\sh}$ and $M^{G/N \hyphen e \hyphen\sh} = M^{G'/N' \hyphen e \hyphen\sh}$. Let $\pi : G' \to G'/N'$ denote the projection. We have $\pi(G'_i) = (G'/N')_i$ for 	all $i$. Hence if $m \in M$, then $\vm(g\cdot m-m) \geq p^\lambda\cdot p^{ei}+ \mu$ for all $g \in G'_i$ if and only if $\vm(\pi(g) \cdot m-m) \geq p^\lambda\cdot p^{ei}+ \mu$ for all $\pi(g) \in (G'/N')_i$.
\end{proof}

\begin{prop}
\label{propmsh}
Suppose that $M$ is a ring, and that $g(mm') = g(m)g(m')$ and $\vm(mm') \geq \vm(m) + \vm(m')$ for all $m,m' \in M$ and $g \in G$.
\begin{enumerate}
\item If $v \in \RR$ and $m,m' \in M^{G\hyphen e \hyphen\sh,\lambda,\mu} \cap M^{\vm \geq v}$, then $m \cdot m' \in M^{G\hyphen e \hyphen\sh,\lambda,\mu+v}$.
\item If $m \in M^{G\hyphen e \hyphen\sh,\lambda,\mu} \cap M^\times$, then $1/m \in M^{G\hyphen e \hyphen\sh,\lambda,\mu-2 \vm(m)}$.
\end{enumerate}
\end{prop}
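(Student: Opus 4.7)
My plan is to reduce both assertions to Lemma \ref{mshalt}: it suffices to estimate $\vm(g\cdot x - x)$ for $x = mm'$ or $x = 1/m$ and $g \in G_i$, and check that it is at least $p^\lambda \cdot p^{ei}$ plus the claimed constant. Throughout, I will use that $g$ acts by isometries and is a ring endomorphism, so $\vm(g(a)) = \vm(a)$ and $g(1/m) = 1/g(m)$, together with the sub-multiplicativity of $\vm$.

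For (1), I would start from the Leibniz-type identity
\[
g(mm') - mm' \;=\; g(m)\bigl(g(m') - m'\bigr) \;+\; \bigl(g(m) - m\bigr)m'.
\]
For $g \in G_i$, Lemma \ref{mshalt} applied to $m$ and $m'$ gives $\vm(g(m)-m) \geq p^\lambda \cdot p^{ei} + \mu$ and likewise for $m'$; combined with $\vm(g(m)) = \vm(m) \geq v$ and $\vm(m') \geq v$ and sub-multiplicativity, each of the two summands has $\vm$ at least $p^\lambda \cdot p^{ei} + \mu + v$. Taking the minimum and applying Lemma \ref{mshalt} in the reverse direction yields $mm' \in M^{G\hyphen e\hyphen\sh,\lambda,\mu+v}$.

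For (2), I would use the identity
\[
g(1/m) - 1/m \;=\; \frac{1}{g(m)} - \frac{1}{m} \;=\; \frac{1}{m}\,\bigl(m - g(m)\bigr)\,\frac{1}{g(m)},
\]
which is checked by multiplying through by $m\cdot g(m)$. Sub-multiplicativity of $\vm$ then gives
\[
\vm\bigl(g(1/m) - 1/m\bigr) \;\geq\; \vm(1/m) + \vm(m-g(m)) + \vm(1/g(m)).
\]
Since $g$ is an isometry and $g(1/m)=1/g(m)$, the two outer terms are each equal to $\vm(1/m) = -\vm(m)$; the middle term is $\geq p^\lambda \cdot p^{ei} + \mu$ by the super-H\"older hypothesis on $m$. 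Adding up and invoking Lemma \ref{mshalt} gives $1/m \in M^{G\hyphen e\hyphen\sh,\lambda,\mu - 2\vm(m)}$.

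There is no real obstacle; the only thing to be careful with is that the bound in (2) implicitly uses $\vm(1/m) = -\vm(m)$, i.e.\ multiplicativity on units, which is automatic once $m \in M^\times$ is assumed and $\vm$ satisfies the sub-multiplicative axiom together with $\vm(1)=0$ (as $0 = \vm(1) = \vm(m \cdot 1/m) \geq \vm(m) + \vm(1/m)$ gives one inequality, and the reverse follows by applying sub-multiplicativity to $m = m \cdot (1/m) \cdot m$). Otherwise, both parts are pure bookkeeping with the two identities above.
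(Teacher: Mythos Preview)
Your argument is essentially the paper's: for (1) you unpack the Leibniz identity that underlies Proposition~\ref{hrng} and combine it with Lemma~\ref{mshalt}, and for (2) you use exactly the same fraction identity the paper displays. So the approach matches.

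One small slip in your closing remark: both applications of sub-multiplicativity you give yield the \emph{same} inequality $\vm(1/m) \leq -\vm(m)$, not opposite ones. Indeed, from $m = m\cdot(1/m)\cdot m$ you get $\vm(m) \geq 2\vm(m)+\vm(1/m)$, hence again $\vm(1/m)\leq -\vm(m)$. The direction your estimate actually needs is $\vm(1/m) \geq -\vm(m)$, and that does not follow from sub-multiplicativity alone. The paper's proof tacitly relies on the same fact; in all the applications in \S\ref{secfon}--\ref{seclt} the valuation is genuinely multiplicative (on a field), so the point is harmless there, but strictly speaking you should either assume multiplicativity on $M^\times$ or note that the constant $\mu-2\vm(m)$ should be read as $\mu+2\vm(1/m)$.
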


\begin{proof}
Item (1) follows from prop \ref{hrng}
and lemma \ref{mshalt}. Item (2) follows from 
\[ g\left(\frac 1m\right) - \frac 1m = \frac{m-g(m)}{g(m) m}. \]
\end{proof}

\section{The field of norms}
\label{secfon}

Let $K$ be a $p$-adic field, and let $K_\infty$ be an algebraic Galois extension of $K$, whose Galois group $G$ is a $p$-adic Lie group of dimension $\geq 1$. We assume that $K_\infty/K$ is almost totally ramified, namely that the inertia subgroup of $G$ is open in $G$. Let $d=\dim(G)$ and let $\ell=p^d$. Let $\et^+_{K_\infty}$ denote the ring $\projlim_{x \mapsto x^\ell} \OO_{K_\infty} / p$. This is a perfect domain of characteristic $p$, which has a natural action of $G$. The map $(y_j)_{j \geq 0} \mapsto (y_{di})_{i \geq 0}$ gives an isomorphism between $\projlim_{x \mapsto x^p} \OO_{K_\infty} / p$ and $\et^+_{K_\infty}$, so that $\et^+_{K_\infty}$ is the ring of integers of the tilt of $\hat{K}_\infty$ (see \S 3 of \cite{S12}).

If $x=(x_i)_{i \geq 0}$, and $\hat{x}_i$ is a lift of $x_i$ to $\OO_{K_\infty}$, then $\ell^i \vp(\hat{x}_i)$ is independent of $i \geq 0$ such that $x_i \neq 0$.
We define a valuation on $\et^+_{K_\infty}$ by $\ve(x) = \lim_{i \to + \infty} \ell^i \vp(\hat{x}_i)$.

The aim of this section is to compute $(\et^+_{K_\infty})^{d \hyphen\sh}$. Given definition \ref{shplg}, we assume from now on (replacing $K$ by a finite subextension if necessary) that $G$ is uniform and that $K_\infty/K$ is totally ramified. Let $k$ denote the common residue field of $K$ and $K_\infty$.

\subsection{The field of norms}
\label{subdefon}

Let $\calE(K_\infty)$ denote the set of finite extensions $E$ of $K$ such that $E \subset K_\infty$. Let $X_K(K_\infty)$ denote the set of sequences $(x_E)_{E \in \calE(K_\infty)}$ such that $x_E \in E$ for all $E \in \calE(K_\infty)$, and $\Nm_{F/E}(x_F) = x_E$ whenever $E \subset F$ with $E,F \in \calE(K_\infty)$. 

If $n \geq 0$, let $K_n  = K_\infty^{G_n}$ so that $[K_{n+1}:K_n]=\ell$, $\{K_n\}_{n \geq 0}$ is a cofinal subset of $\calE(K_\infty)$, and $X_K(K_\infty) = \projlim_{\Nm_{K_n/K_{n-1}}} K_n$. If $x= (x_n)_{n \geq 0} \in X_K(K_\infty)$, let $\ve(x)=\vp(x_0)$. 

\begin{theo}
\label{wintheo}
Let $K$ and $K_\infty$ be as above.
\begin{enumerate}
\item If $x,y \in X_K(K_\infty)$, then $\{\Nm_{K_{n+j}/K_n}(x_{n+j}+y_{n+j})\}_{j \geq 0}$ converges for all $n \geq 0$. 
\item If we set $(x+y)_n = \lim_{j \to +\infty} \Nm_{K_{n+j}/K_n}(x_{n+j}+y_{n+j})$, then $x+y \in X_K(K_\infty)$, and the set $X_K(K_\infty)$ with this addition law, and componentwise multiplication, is a field of characteristic $p$.
\item The function $\ve$ is a valuation on $X_K(K_\infty)$, for which it is complete
\item If $\varpi = (\varpi_n)_{n \geq 0}$ is a norm compatible sequence of uniformizers of $\OO_{K_n}$, the valued field $X_K(K_\infty)$ is isomorphic to $k \dpar{\varpi}$ (with $\val(\varpi)=\vp(\varpi_0)$).
\end{enumerate}
\end{theo}

\begin{proof}
By a result of Sen \cite{S72}, $K_\infty/K$ is strictly APF in the terminology of \S 1.2 of \cite{W83} (see 1.2.2 of ibid). The theorem is then proved in \S 2 of ibid.
\end{proof}

Let $X_K^+(K_\infty) = \projlim_{\Nm_{K_n/K_{n-1}}} \OO_{K_n}$ be the ring of integers of the valued field $X_K(K_\infty)$.

If $c>0$, let $I^c_n = \{ x \in \OO_{K_n}$ such that $\vp(x) \geq c\}$. If $m, n \geq 0$, the map $\OO_{K_n} / I^c_n \to \OO_{K_{m+n}} / I^c_{m+n}$ is well-defined and injective.

\begin{prop}
\label{fonembone}
There exists $c(K_\infty/K) \leq 1$ such that if $0 < c \leq c(K_\infty/K)$, then $\vp(\Nm_{K_{n+k}/K_n}(x)/x^{[K_{n+k}:K_n]}-1) \geq c$ for all $n,k \geq 0$ and $x \in \OO_{K_{n+k}}$.
\end{prop}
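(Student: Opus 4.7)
The plan is to proceed by induction on $k$, reducing everything to the substantive case $k=1$. For the inductive step, I would set $y_j = \Nm_{K_{n+k}/K_{n+j}}(x)$ for $0 \leq j \leq k$, so that $y_k = x$, $y_0 = \Nm_{K_{n+k}/K_n}(x)$, and $y_{j-1} = \Nm_{K_{n+j}/K_{n+j-1}}(y_j)$ by transitivity of norms. If the $k=1$ case provides a uniform constant $c$ with each $y_{j-1} = y_j^\ell(1+u_{j-1})$ and $\vp(u_{j-1}) \geq c$, then telescoping yields $\Nm_{K_{n+k}/K_n}(x)/x^{\ell^k} = \prod_{j=0}^{k-1}(1+u_j)^{\ell^j}$. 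The elementary bound $\vp((1+v)^a - 1) \geq \vp(v)$, valid for any $a \in \ZZ$ and any $v$ in the maximal ideal (proved by induction on $|a|$ using $(1+v)^{a+1} - 1 = (1+v)((1+v)^a-1) + v$), then forces each factor, and hence the whole product, to differ from $1$ by an element of valuation $\geq c$.

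For the base case $k=1$, since $K_{n+1}/K_n$ is Galois of degree $\ell$ I would write $\Nm_{K_{n+1}/K_n}(x)/x^\ell = \prod_{\sigma \in \Gal(K_{n+1}/K_n)} \sigma(x)/x$, so it suffices to exhibit a uniform $c > 0$ with $\vp(\sigma(x)/x - 1) \geq c$ for all $n$, all nontrivial $\sigma$, and all $x \in K_{n+1}^\times$. Decomposing $x = \pi^a u$ with $\pi = \pi_{n+1}$ a uniformizer of $\OO_{K_{n+1}}$ and $u$ a unit gives
\[ \sigma(x)/x - 1 = \bigl((1+v)^a - 1\bigr) + (1+v)^a w, \]
where $v = \sigma(\pi)/\pi - 1$ and $w = \sigma(u)/u - 1$; applying the same inequality on $(1+v)^a - 1$, this reduces to bounding $\vp(v)$ and $\vp(w)$ from below. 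Since $K_{n+1}/K_n$ is totally ramified we have $\OO_{K_{n+1}} = \OO_{K_n}[\pi]$, so the classical estimate $\val_{K_{n+1}}(\sigma(y)-y) \geq \val_{K_{n+1}}(\sigma(\pi)-\pi) = i_{K_{n+1}/K_n}(\sigma)$ for all $y \in \OO_{K_{n+1}}$ yields
\[ \vp(\sigma(x)/x-1) \geq \frac{i_{K_{n+1}/K_n}(\sigma) - 1}{e_{K_{n+1}}}. \]

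The hard part will be the remaining ramification estimate: producing a positive constant $c_0$, depending only on $K_\infty/K$, such that $(i_{K_{n+1}/K_n}(\sigma) - 1)/e_{K_{n+1}} \geq c_0$ for every $n \geq 0$ and every nontrivial $\sigma \in \Gal(K_{n+1}/K_n)$; equivalently, the breaks of the lower-numbering ramification filtration in the successive layers must grow at least proportionally to $e_{K_{n+1}} = e_K \cdot \ell^{n+1}$. This is precisely the content of the strictly APF property of $K_\infty/K$, which holds by Sen's theorem \cite{S72} (already invoked in Theorem \ref{wintheo}) and is the central input of the Fontaine--Wintenberger construction of the field of norms \cite{W83}. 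Setting $c(K_\infty/K) = \min(c_0, 1)$ then gives the proposition.
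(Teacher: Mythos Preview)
Your proof is correct and follows essentially the same route as the paper: both reduce the estimate to the strictly APF property of $K_\infty/K$ established by Sen, which the paper simply cites via \cite{W83} (specifically 1.2.1--1.2.3 and 4.2.2) and \cite{CD15}. Your induction on $k$ together with the Galois factorization $\Nm_{K_{n+1}/K_n}(x)/x^\ell = \prod_\sigma \sigma(x)/x$ and the bound $\vp(\sigma(x)/x-1) \geq (i_{K_{n+1}/K_n}(\sigma)-1)/e_{K_{n+1}}$ is exactly how one unpacks those citations; the only point to make precise is that the uniform lower bound on $(i_{K_{n+1}/K_n}(\sigma)-1)/e_{K_{n+1}}$ is not literally the definition of strictly APF but follows from it after noting that the first lower and upper ramification breaks of $K_{n+1}/K_n$ coincide and that Wintenberger's invariant $i(K_\infty/K_n)$ bounds the latter.
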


\begin{proof}
See \cite{W83} as well as \S 4 of \cite{CD15}. The result follows from the fact (see 1.2.2 of \cite{W83}) that the extension $K_\infty/K$ is strictly APF. One can then apply 1.2.1, 4.2.2 and 1.2.3 of \cite{W83}.
\end{proof}

Using prop \ref{fonembone}, we get a map $\iota : X_K^+(K_\infty) \to \projlim_{x \mapsto x^\ell} \OO_{K_\infty} / I^c_\infty$ given by $(x_n)_{n \geq 0} \in \projlim_{\Nm_{K_n/K_{n-1}}} \OO_{K_n} \mapsto (\overline{x}_n)_{n \geq 0}$. Let $\projlim_{x \mapsto x^\ell} \OO_{K_n} / I^c_n$ denote the set of $(x_n)_{n \geq 0} \in \projlim_{x \mapsto x^\ell} \OO_{K_\infty} / I^c_\infty$ such that $x_n \in \OO_{K_n} / I^c_n$ for all $n \geq 0$.

\begin{prop}
\label{fonembtwo}
Let $0 < c \leq c(K_\infty/K)$ be as in prop \ref{fonembone}.
\begin{enumerate}
\item the natural map $\et^+_{K_\infty} \to \projlim_{x \mapsto x^\ell} \OO_{K_\infty} / I^c_\infty$ is a bijection
\item the map $\iota : X_K^+(K_\infty) \to \projlim_{x \mapsto x^\ell} \OO_{K_\infty} / I^c_\infty = \et^+_{K_\infty}$ is injective and isometric
\item the image of $\iota$ is $\projlim_{x \mapsto x^\ell} \OO_{K_n} / I^c_n$.
\end{enumerate}
\end{prop}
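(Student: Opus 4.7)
The plan is to prove the three parts in order; the bulk of the work lies in part (3).

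For part (1), the map is induced componentwise by the reduction $\OO_{K_\infty}/p \twoheadrightarrow \OO_{K_\infty}/I^c_\infty$, which is defined since $c \leq 1$. For injectivity, if $(y_n) \in \et^+_{K_\infty}$ lies in the kernel, then any lift $\hat{y}_{n+k} \in \OO_{K_\infty}$ of $y_{n+k}$ has $\vp(\hat{y}_{n+k}) \geq c$, so $\hat{y}_{n+k}^{\ell^k}$ is a lift of $y_n$ of valuation $\geq c\ell^k$; taking $k$ large makes this exceed $1$, forcing $y_n = 0$. For surjectivity, given $(\bar{x}_n)$ with lifts $\hat{x}_n \in \OO_{K_\infty}$ satisfying $\hat{x}_{n+1}^\ell - \hat{x}_n \in I^c_\infty$, I argue that $(\hat{x}_{n+k}^{\ell^k})_k$ is $p$-adically Cauchy in $\OO_{\hat{K}_\infty}$: writing $\hat{x}_{n+k+1}^{\ell^{k+1}} = (\hat{x}_{n+k} + z_k)^{\ell^k}$ with $\vp(z_k) \geq c$ and expanding by the binomial theorem, the middle terms carry factors $\binom{\ell^k}{j}$ whose $p$-adic valuation grows linearly in $k$, while the end term has $\vp \geq c\ell^k$. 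The resulting limit $\tilde{y}_n$ satisfies $\tilde{y}_n^\ell = \tilde{y}_{n-1}$, and its mod-$p$ reduction is a preimage of $(\bar{x}_n)$ in $\et^+_{K_\infty}$.

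For part (2), prop \ref{fonembone} gives $x_n = \Nm_{K_{n+1}/K_n}(x_{n+1}) \equiv x_{n+1}^\ell \pmod{I^c_\infty}$, so $\bar{x}_n = \bar{x}_{n+1}^\ell$ in $\OO_{K_\infty}/I^c_\infty$ and $\iota$ is well-defined. Multiplicativity is coordinate-wise; additivity follows by reducing the defining formula $(x+y)_n = \lim_j \Nm_{K_{n+j}/K_n}(x_{n+j} + y_{n+j})$ modulo $I^c$ and reapplying prop \ref{fonembone} to $x_{n+j} + y_{n+j}$. For the isometric property, I represent $\iota((x_n))$ via part (1) and take $x_n \in \OO_{K_n} \subset \OO_{K_\infty}$ itself as a lift of $\bar{x}_n$; then for each $n$ with $x_n \neq 0$,
\[
\ell^n \vp(x_n) = \vp(\Nm_{K_n/K}(x_n)) = \vp(x_0),
\]
hence $\ve(\iota((x_n))) = \vp(x_0) = \ve((x_n))$. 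Injectivity of $\iota$ is immediate from preservation of the valuation.

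For part (3), the inclusion $\iota(X_K^+(K_\infty)) \subseteq \projlim_{x\mapsto x^\ell} \OO_{K_n}/I^c_n$ is immediate from the construction of $\iota$. For the reverse, given $(\bar{x}_n)$ with $\bar{x}_n \in \OO_{K_n}/I^c_n$ and lifts $\hat{x}_n \in \OO_{K_n}$, the natural preimage candidate is
\[
y_n := \lim_{k \to \infty} \Nm_{K_{n+k}/K_n}(\hat{x}_{n+k}),
\]
the limit being taken in the $p$-adic topology on the complete ring $\OO_{K_n}$. Granted convergence, norm-compatibility of $(y_n)$ is automatic by continuity of norms, and the congruence $y_n \equiv \hat{x}_n \pmod{I^c_n}$ follows from combining prop \ref{fonembone} with the hypothesis $\hat{x}_{n+k}^{\ell^k} \equiv \hat{x}_n \pmod{I^c_\infty}$, yielding $\iota((y_n)) = (\bar{x}_n)$. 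The main obstacle is the Cauchy property of the norm sequence: prop \ref{fonembone}, with its uniform constant $c$, only bounds successive differences by a fixed amount. To make the argument work, one must invoke the finer estimate coming from the strictly APF machinery of \cite{W83} (which enters the proof of theorem \ref{wintheo}), namely that $\vp(\Nm_{K_{n+1}/K_n}(x)/x^\ell - 1) \geq c_n$ with $c_n \to \infty$; this forces the telescoping differences of $\Nm_{K_{n+k}/K_n}(\hat{x}_{n+k})$ to have valuation tending to $\infty$, establishing the required convergence in $\OO_{K_n}$.
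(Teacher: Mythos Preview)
Your argument for parts (1) and (2) is correct and considerably more detailed than the paper's, which simply declares (1) ``classical'' and (2) ``obvious'' with references to \cite{W83} and \cite{CD15}. For part (3) you follow exactly the strategy the paper sketches: lift to $\hat{x}_n \in \OO_{K_n}$, form $y_n = \lim_k \Nm_{K_{n+k}/K_n}(\hat{x}_{n+k})$, and check that $(y_n)$ lands in $X_K^+(K_\infty)$ with the right image. The paper does not carry out the convergence argument either, deferring instead to lemma 4.1 of \cite{CD15}.

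There is, however, a genuine inaccuracy in how you resolve the convergence. You assert that the strictly APF machinery yields $\vp(\Nm_{K_{n+1}/K_n}(x)/x^\ell - 1) \geq c_n$ with $c_n \to \infty$. In the $\vp$-normalization this is false: already for the cyclotomic tower $K_n = \Qp(\zeta_{p^{n+1}})$ one computes $\vp(\sigma(\varpi_{n+1})/\varpi_{n+1} - 1) = 1/(p-1) - 1/(p^{n+1}(p-1))$ for a generator $\sigma$ of $\Gal(K_{n+1}/K_n)$, so the best uniform constant stays near $1/(p-1)$. What does grow is the analogous quantity in the $v_{K_{n+1}}$-normalization (equivalently, the lower ramification numbers), and it is this growth, combined with the $\ell$-th-power amplification $\vp((a+z)^{\ell^k} - a^{\ell^k}) \to \infty$ when $\vp(z) \geq c$, that Wintenberger and Cais--Davis exploit. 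Moreover, even granting your $c_n \to \infty$, the telescoping estimate you describe would still bottom out at $c_n$ for fixed $n$ and not tend to infinity with $k$. So while your diagnosis that prop~\ref{fonembone} alone is insufficient is exactly right, and your instinct to invoke the APF input behind theorem~\ref{wintheo} is sound, the specific estimate you name is not the one that does the job; the paper is wise to simply cite \cite{CD15} for this step.
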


\begin{proof}
See \cite{W83} and \S 4 of \cite{CD15}. We give a few more details for the convenience of the reader. Item (1) is classical (see for instance prop 4.2 of \cite{CD15}).The map $\iota$ is obviously injective and isometric. For (3), choose $x=(x_n)_{n \geq 0} \in \projlim_{x \mapsto x^\ell} \OO_{K_n} / I^c_n$, and choose a lift $\hat{x}_n \in \OO_{K_n}$ of $x_n$. One proves that $\{\Nm_{K_{n+j}/K_n}(\hat{x}_{n+j})\}_{j \geq 0}$ converges to some $y_n \in \OO_{K_n}$, and that $(y_n)_{n \geq 0} \in X_K^+(K_\infty)$ is a lift of $(x_n)_{n \geq 0}$. See \S 4 of \cite{CD15} for details, for instance the proof of lemma 4.1.
\end{proof}

Prop \ref{fonembtwo} allows us to see $X_K^+(K_\infty)$, and hence $\phi^{-n}(X_K^+(K_\infty))$ for all $n \geq 0$, as a subring of $\et^+_{K_\infty}$.

\begin{prop}
\label{perfcomp}
The ring $\cup_{n \geq 0} \phi^{-n}(X_K^+(K_\infty))$ is dense in $\et^+_{K_\infty}$.
\end{prop}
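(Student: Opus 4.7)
The plan is to use prop \ref{fonembtwo} to identify $\et^+_{K_\infty}$ with $\projlim_{x\mapsto x^\ell}\OO_{K_\infty}/I^c_\infty$, under which $X_K^+(K_\infty)$ embeds as $\projlim_{x\mapsto x^\ell}\OO_{K_n}/I^c_n$. Since $\phi^d$ acts componentwise as the $\ell$-th power, that is, as the transition map of this projective limit, its inverse $\phi^{-Md}$ is the forward shift $(y_n)_n\mapsto(y_{n+M})_n$. Hence the image of $\phi^{-Md}(X_K^+(K_\infty))$ in $\et^+_{K_\infty}$ consists of sequences $(y_n)_n$ with $y_n\in\OO_{K_{n+M}}/I^c_{n+M}$ for all $n\geq 0$. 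It therefore suffices to prove that $\bigcup_{M\geq 0}\phi^{-Md}(X_K^+(K_\infty))$ is dense in $\et^+_{K_\infty}$.

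Given $x=(x_n)_n\in\et^+_{K_\infty}$ and a target precision $N\geq 1$, the key reduction is that matching a single component is enough. Choose $n_0$ with $\ell^{n_0}c\geq N$; if $y_{n_0}=x_{n_0}$ in $\OO_{K_\infty}/I^c_\infty$, then any lift of $(x-y)_{n_0}$ to $\OO_{K_\infty}$ has $\vp\geq c$, whence $\ve(x-y)\geq\ell^{n_0}c\geq N$. Using $\OO_{K_\infty}/I^c_\infty=\bigcup_m\OO_{K_m}/I^c_m$, pick $m\geq n_0$ with $x_{n_0}\in\OO_{K_m}/I^c_m$, and set $M=m-n_0$. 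The task then reduces to producing $z\in X_K^+(K_\infty)$ whose $m$-th component is $x_{n_0}$: for then $y:=\phi^{-Md}(z)$ satisfies $y_{n_0}=z_m=x_{n_0}$ as desired.

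The existence of such $z$ amounts to the surjectivity of the projection $X_K^+(K_\infty)\to\OO_{K_m}/I^c_m$, which I would derive from theorem \ref{wintheo}. Fixing a norm-compatible sequence of uniformizers $\varpi=(\varpi_n)_n$, the isomorphism $X_K^+(K_\infty)\cong k\dcroc{\varpi}$ sends $\sum a_i\varpi^i$ to the element with $m$-th component $\sum\widetilde{a_i}\varpi_m^i$ in $\OO_{K_m}/I^c_m$, where $\widetilde{a_i}$ is the Teichm\"uller lift of $a_i\in k$; since every element of $\OO_{K_m}/I^c_m$ admits a finite $\varpi_m$-adic expansion $\sum b_i\varpi_m^i$ with $b_i\in k$, this map is surjective. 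The main obstacle is establishing this surjectivity cleanly (the forward extraction of $\ell$-th roots of a given $a\in\OO_{K_m}/I^c_m$ to $\OO_{K_{m+j}}/I^c_{m+j}$ is delicate when treated directly, but it follows effortlessly from the $\varpi_m$-adic expansion argument); the rest of the proof is assembly.
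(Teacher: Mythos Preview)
Your argument is correct. The paper itself does not give a proof of this proposition; it simply cites \S 4.3 of \cite{W83}. So there is no ``paper's proof'' to compare against, and what you have written is a self-contained verification of the cited fact using only the tools already assembled in \S\ref{subdefon}.

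One small point worth tightening: your claim that the map $k\dcroc{\varpi}\to\OO_{K_m}/I^c_m$ sends $\sum a_i\varpi^i$ to $\sum\widetilde{a_i}\varpi_m^i$ with $\widetilde{a_i}$ the Teichm\"uller lift is not quite justified as stated, because you have not identified the image of $k\subset X_K^+(K_\infty)$ in $\OO_{K_m}/I^c_m$ explicitly. However, the surjectivity you need follows without this: the composite $X_K^+(K_\infty)\xrightarrow{\iota}\projlim\OO_{K_n}/I^c_n\to\OO_{K_m}/I^c_m$ is a ring homomorphism of $\Fp$-algebras (since $p\in I^c_m$) sending $\varpi\mapsto\varpi_m$; the induced map on residue fields $k\to k$ is an $\Fp$-algebra endomorphism of a finite field, hence an automorphism; and since $\OO_{K_m}/I^c_m$ is Artinian local with maximal ideal generated by $\varpi_m$, surjectivity follows by successive approximation. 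This is essentially what you wrote, just without the unneeded identification of the section.
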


\begin{proof}
See \S 4.3 of \cite{W83}.
\end{proof}

\subsection{Decompleting the tilt}
\label{subdecfon}

We now compute $(\et^+_{K_\infty})^{d \hyphen \sh}$. Since prop \ref{axlb} below is vacuous if $p=2$, we assume in this {\S} that $p \neq 2$.

\begin{prop}
\label{axlb}
If $0 < c \leq 1-1/(p-1)$, and $x \in \OO_{K_\infty}$ is such that $\vp(g(x)-x) \geq 1$ for all $g \in G_n$, then the image of $x$ in $\OO_{K_\infty} / I^c_\infty$ belongs to $\OO_{K_n} / I^c_n$.
\end{prop}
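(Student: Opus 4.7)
This proposition is a variant of the Ax--Sen--Tate approximation theorem: we want to produce $y \in \OO_{K_n}$ such that $\vp(x - y) \geq c$, which will realize the image of $x$ in $\OO_{K_\infty}/I^c_\infty$ as coming from $\OO_{K_n}/I^c_n$ under the natural injection recorded just before prop \ref{fonembone}.

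The first step is to upgrade the hypothesis, which is stated only for $g \in G_n = \Gal(K_\infty/K_n)$, into a hypothesis on the full absolute Galois group of $K_n$. Since $x \in K_\infty$ and $K_\infty/K_n$ is Galois with group $G_n$, every $\sigma \in \Gal(\overline{K_n}/K_n)$ acts on $x$ through its image in $G_n$, so the bound $\vp(\sigma(x) - x) \geq 1$ holds in fact for every $\sigma \in \Gal(\overline{K_n}/K_n)$. This is the step that lets us leave the $p$-adic Lie setting and apply a general theorem.

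Now apply the Ax--Sen--Tate theorem in its sharp form: this produces $\tilde y \in \widehat{K_n}$ with $\vp(x - \tilde y) \geq 1 - 1/(p-1)$. The hypothesis $p \neq 2$ is what makes this bound $> 0$ and in particular $\geq c$; for $p = 2$ it collapses to $0$, explaining why the statement is vacuous in that case. Because $x \in \OO_{K_\infty}$ and $1 - 1/(p-1) \geq 0$, we moreover have $\tilde y \in \OO_{\widehat{K_n}}$. Using that $K_n$ is dense in $\widehat{K_n}$, we choose $y \in \OO_{K_n}$ with $\vp(\tilde y - y) \geq c$, and hence $\vp(x - y) \geq c$ inside $\OO_{K_\infty}$. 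Via the injection $\OO_{K_n}/I^c_n \hookrightarrow \OO_{K_\infty}/I^c_\infty$, this says exactly that the image of $x$ in $\OO_{K_\infty}/I^c_\infty$ lies in $\OO_{K_n}/I^c_n$, which is what we wanted.

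The only nontrivial ingredient is the sharp form of Ax--Sen--Tate producing the defect $1/(p-1)$; the rest is a matter of extending the Galois-equivariance hypothesis and invoking density of $K_n$ in its completion. I would expect the only delicate point to be matching the statement in the literature so that the constant fits exactly the bound $c \leq 1 - 1/(p-1)$ appearing in the proposition, rather than the extension and density arguments, which are routine.
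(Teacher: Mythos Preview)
Your proof is correct and follows essentially the same route as the paper's: extend the hypothesis to the full absolute Galois group of $K_n$, then invoke the sharp Ax--Sen--Tate theorem (theorem 1.7 of \cite{LB10}) to obtain $y \in K_n$ with $\vp(x-y) \geq 1 - 1/(p-1) \geq c$. Note only that $K_n$, being a finite extension of the $p$-adic field $K$, is already complete, so your intermediate passage through $\widehat{K_n}$ and the density argument are unnecessary.
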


\begin{proof}
If $\vp(g(x)-x) \geq 1$ for all $g \in \Gal(K^{\alg}/K_n)$, then by theorem 1.7 of \cite{LB10} (an optimal version of a theorem of Ax), there exists $y \in K_n$ such that $\vp(x-y) \geq 1-1/(p-1)$. This implies the proposition.
\end{proof}

\begin{prop}
\label{xksh}
If $c = p^\gamma$ is as above, then $X_K^+(K_\infty) \subset (\et^+_{K_\infty})^{G\hyphen d \hyphen\sh,\gamma,0}$.
\end{prop}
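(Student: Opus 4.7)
The plan is to apply lemma \ref{mshalt}, which converts the claim into the estimate $\ve(g \cdot x - x) \geq p^\gamma \cdot p^{di}$ for every $x \in X_K^+(K_\infty)$ and every $g \in G_i$, for all $i \geq 0$. Note that $p^{ei} = p^{di} = \ell^i$ and $p^\gamma = c$, so the target inequality is simply $\ve(g \cdot x - x) \geq c \cdot \ell^i$.

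Under the embedding of prop \ref{fonembtwo}, the element $x = (x_n)_{n \geq 0}$ is identified with the reduction $(\overline{x}_n)_{n \geq 0}$ in $\projlim_{y \mapsto y^\ell} \OO_{K_n}/I_n^c \subset \et^+_{K_\infty}$, and the Galois action on $\et^+_{K_\infty}$ is induced componentwise from the action on $\OO_{K_\infty}/I^c_\infty$. The crucial input is that $x_n$ lives in $K_n = K_\infty^{G_n}$: since $G_i \subset G_n$ whenever $i \geq n$, every $g \in G_i$ fixes $x_n$ on the nose (not merely mod $I^c_n$) for all $n \leq i$. Componentwise, this shows that the first $i+1$ components of $g \cdot \iota(x) - \iota(x)$ vanish in $\OO_{K_\infty}/I^c_\infty$.

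It then remains to translate ``the $i$-th component vanishes mod $I^c_\infty$'' into ``$\ve \geq c \cdot \ell^i$'', which is a direct unwinding of the definition of $\ve$: if $y = (y_n)_n \in \et^+_{K_\infty}$ satisfies $y_i = 0$ in $\OO_{K_\infty}/I^c_\infty$, then any lift $\hat y_i \in \OO_{K_\infty}$ of $y_i$ has $\vp(\hat y_i) \geq c = p^\gamma$, so $\ve(y) = \ell^i \vp(\hat y_i) \geq p^{di} \cdot p^\gamma$ (for the index $i$, and more generally by the stability of $\ell^n \vp(\hat y_n)$). Applied to $y = g \cdot \iota(x) - \iota(x)$, this gives exactly the required bound.

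I do not expect any real obstacle here: the argument is essentially a one-line observation plus bookkeeping with the two descriptions of $\et^+_{K_\infty}$ from prop \ref{fonembtwo}. Note in particular that prop \ref{axlb} is \emph{not} used for this inclusion; it encodes the converse (approximate Galois invariance forces membership in the field of norms up to $I^c$), which will be the substantive ingredient when one later proves the reverse inclusion $(\et^+_{K_\infty})^{G\hyphen d \hyphen\sh} \subset \cup_n \phi^{-n}(X_K^+(K_\infty))$.
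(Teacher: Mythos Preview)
Your proposal is correct and follows the same approach as the paper's proof: observe that $g\in G_i$ fixes $x_n$ for all $n\leq i$ (since $x_n\in K_n=K_\infty^{G_n}$), so the $i$-th component of $g\cdot x - x$ vanishes mod $I^c$, giving $\ve(g\cdot x-x)\geq p^{di}p^\gamma$. Your write-up is simply more explicit than the paper's one-line version, invoking lemma~\ref{mshalt} and spelling out the passage from ``$i$-th component vanishes mod $I^c$'' to the valuation bound; the minor imprecision in writing $\ve(y)=\ell^i\vp(\hat y_i)$ (strictly this equality holds via the sharp-lift, not an arbitrary lift) is harmless since $c\leq 1$ and the inequality $\ve(y)\geq \ell^i c$ is all that is needed.
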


\begin{proof}
Take $x= (x_n)_{n \geq 0} \in \projlim_{x \mapsto x^\ell} \OO_{K_n} / I^c_n$. If $g \in G_i$, then $g(x_n) = x_n$ for $n \leq i$, so that $\ve(gx-x) \geq p^{di}p^\gamma$.
\end{proof}

\begin{theo}
\label{decfonsh}
We have 
\begin{enumerate}
\item $(\et^+_{K_\infty})^{G\hyphen d \hyphen\sh,0,0} \subset X_K^+(K_\infty)$
\item $(\et^+_{K_\infty})^{d \hyphen \sh} = \cup_{n \geq 0} \phi^{-n}(X_K^+(K_\infty))$ and $\et_{K_\infty}^{d \hyphen \sh} = \cup_{n \geq 0} \phi^{-n}(X_K(K_\infty))$
\end{enumerate}
\end{theo}

\begin{proof}
Take $c \leq \min(c(K_\infty/K),1-1/(p-1))$. Take $x=(x_n)_{n \geq 0} \in \projlim_{x \mapsto x^\ell} \OO_{K_\infty} / p$. If $n \geq 0$ and $x \in  (\et^+_{K_\infty})^{G\hyphen d \hyphen\sh,0,0}$, then $\ve(g(x)-x) \geq p^{dn}$ if $g \in G_n$. This implies that $\vp(g(x_n)-x_n) \geq 1$ if $g \in G_n$. By prop \ref{axlb}, the image of $x_n$ in $\OO_{K_\infty} / I^c_\infty$ belongs to $\OO_{K_n} / I^c_n$. Hence the image of $x$ in $\projlim_{x \mapsto x^\ell} \OO_{K_\infty} / I^c_\infty$ belongs to $\projlim_{x \mapsto x^\ell} \OO_{K_n} / I^c_n$. By prop \ref{fonembtwo}, $x$ belongs to $X_K^+(K_\infty)$. This proves (1).

Since $\ve(\phi(x)) = p \cdot \ve(x)$, item (2) follows from (1) and props \ref{xksh} and \ref{propmsh}.
\end{proof}

\begin{rema}
\label{otherlocan}
We have $\et_{K_\infty}^{d\hyphen\sh} \subset \et_{K_\infty}^{1\hyphen\sh}$. The field $\et_{K_\infty}^{1\hyphen\sh}$ contains the field of norms $X_K(L_\infty)$ of any $p$-adic Lie extension $L_\infty/K$ contained in $K_\infty$. Indeed, $\et_{L_\infty} \subset \et_{K_\infty}$ and if $e=\dim \Gal(L_\infty/K)$, then $X_K(L_\infty) \subset \et_{L_\infty}^{e\hyphen\sh} \subset \et_{K_\infty}^{1\hyphen\sh}$ (see prop \ref{quotsh}).

Can one give a description of $\et_{K_\infty}^{1\hyphen\sh}$, for example along the lines of \S 5 of \cite{LB16}?
\end{rema}

\section{The Lubin-Tate case}
\label{seclt}

We now specialize the constructions of the previous section to the case when $K_\infty$ is generated over $K$ by the torsion points of a Lubin-Tate formal group.

\subsection{Lubin-Tate formal groups}
\label{subltfg}

Let $K$ be a finite extension of $\Qp$ of degree $d$, with ring of integers $\OO_K$, inertia index $f$, ramification index $e$, and residue field $k$. Let $q=p^f=\Card(k)$ and let $\pi$ be a uniformizer of $\OO_K$. Let $\LT$ be the Lubin-Tate formal $\OO_K$-module attached to $\pi$ (see \cite{LT65}). We choose a coordinate $Y$ on $\LT$. For each $a \in \OO_K$ we get a power series $[a](Y) \in \OO_K \dcroc{Y}$, that we now see as an element of $k \dcroc{Y}$. In particular, $[\pi](Y)=Y^q$. Let $S(T,U) \in k \dcroc{T,U}$ denote the reduction mod $\pi$ of the power series giving the addition law in $\LT$ in that coordinate. Recall that $S(T,0) = T$ and $S(0,U) = U$.

\begin{lemm}
\label{diflt}
If $a,b \in \OO_K$ and $i \geq 0$, then $\vy([a+p^i b](Y)-[a](Y)) \geq p^{di}$.

Furthermore, $[1+\pi^i](Y)=Y+Y^{q^i}+\bigO(Y^{q^i+1})$.
\end{lemm}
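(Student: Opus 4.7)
The plan is to exploit the Lubin-Tate addition formula and its reduction mod $\pi$. For the first assertion, I write
\[ [a+p^i b](Y) = [a](Y) +_{\LT} [p^i b](Y), \]
and reduce mod $\pi$ so that the right hand side becomes $S([a](Y),[p^i b](Y))$. Since $S(T,0)=T$, we can expand $S(T,U) = T + U \cdot R(T,U)$ for some $R(T,U) \in k\dcroc{T,U}$ (or, more precisely, $S(T,U)-T$ is divisible by $U$), so substituting power series yields
\[ \vy\bigl([a+p^i b](Y)-[a](Y)\bigr) \geq \vy\bigl([p^i b](Y)\bigr). \]
Thus everything reduces to bounding $\vy([p^i b](Y))$ from below by $p^{di}$.

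For that, I use the multiplicativity $[p^i b]=[p]^{\circ i}\circ [b]$ together with the fact that for a power series $f \in Y k\dcroc{Y}$ one has $\vy(f \circ g) = \vy(f) \cdot \vy(g)$. Writing $p = u\pi^e$ with $u \in \OO_K^\times$ and $e$ the absolute ramification index, I get $[p] = [u] \circ [\pi]^{\circ e}$. Now $[\pi](Y) \equiv Y^q \pmod{\pi}$, so in $k\dcroc{Y}$ one has $[\pi]^{\circ e}(Y) = Y^{q^e}$, while $[u](Y) = \bar u Y + \cdots$ has $\vy$-valuation $1$. Hence $\vy([p](Y)) = q^e = p^{ef} = p^d$, and iterating, $\vy([p^i](Y)) = p^{di}$. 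Since $\vy([b](Y)) \geq 1$ (the case $b=0$ being trivial), composing gives $\vy([p^i b](Y)) \geq p^{di}$, proving the first assertion.

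For the second assertion, I specialize the same computation: $[1+\pi^i](Y) = S(Y, [\pi^i](Y))$ in $k\dcroc{Y}$, and $[\pi^i](Y) = Y^{q^i}$. Writing the formal group law as $S(T,U) = T + U + \sum_{j,k \geq 1} s_{jk} T^j U^k$ (the normalization $S(T,0)=T$ and $S(0,U)=U$ forces each higher degree monomial to involve both variables), substitution gives
\[ [1+\pi^i](Y) = Y + Y^{q^i} + \sum_{j,k\geq 1} s_{jk} Y^{j+kq^i}, \]
and every exponent in the sum is at least $1+q^i$, yielding the stated expansion.

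The only mildly delicate point is the identity $\vy([p](Y)) = p^d$, which requires tracking both the ramification index $e$ of $K/\Qp$ and the residue degree $f$ through the decomposition $p = u\pi^e$; everything else is a direct consequence of $S(T,0)=T$ and the fact that $[\pi](Y) \equiv Y^q \pmod{\pi}$.
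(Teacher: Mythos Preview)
Your proof is correct and follows essentially the same approach as the paper: both arguments use the decomposition $[a+p^ib](Y)=S([a](Y),[p^ib](Y))$ together with $S(T,0)=T$ to reduce to bounding $\vy([p^ib](Y))$, then write $p=u\pi^e$ and use $[\pi](Y)=Y^q$ in $k\dcroc{Y}$ to obtain $\vy([p^ib](Y))\geq p^{di}$; the second assertion is handled in both cases by expanding $S(Y,Y^{q^i})=Y+Y^{q^i}+(\text{terms involving both variables})$. Your write-up is slightly more explicit about the multiplicativity $\vy(f\circ g)=\vy(f)\cdot\vy(g)$, but the substance is identical.
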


\begin{proof}
We have $[\pi](Y) = Y^q$, so $\vy([\pi](Y)) \geq p^f$. Writing $p = u\pi^e$
for a unit $u$, we see that $\vy([p^i b](Y)) \geq p^{di}$ if $b \in \OO_K$.
If $a,b \in \OO_K$ and $i \geq 0$, then $[a+bp^i](Y) = S([a](Y),[bp^i](Y))$. We have $S(T,U) = T+U+TU \cdot R(T,U)$, so that  $[a+bp^i](Y) - [a](Y) = S([a](Y),[bp^i](Y))-[a](Y) \in [bp^i](Y) \cdot  k \dcroc{Y}$.
This implies the first result. 

The second claim follows likewise from the fact that $[1+\pi^i](Y) = S(Y,[\pi^i](Y)) = Y + [\pi^i](Y) + Y \cdot [\pi^i](Y) \cdot R(Y,[\pi^i](Y))$.
\end{proof}

Let $\efont = k \dpar{Y}$. Let $\efont_n = k \dpar{Y^{1/q^n}}$ and let $\efont_\infty = \cup_{n \geq 0} \efont_n$. These fields are endowed with the $Y$-adic valuation $\vy$, and we let $\efont_\star^+$ denote the ring of integers of $\efont_\star$. The group $\OO_K^\times$ acts on $\efont_n$ by $a \cdot f(Y^{1/q^n}) = f([a](Y^{1/q^n}))$. 

\begin{lemm}
\label{sbokc}
If $j \geq 1$ ($j \geq 2$ if $p=2$), then $1+p^j \OO_K$ is uniform, and $(1+p^j \OO_K)_i = 1+p^{i+j} \OO_K$.
\end{lemm}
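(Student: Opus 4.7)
The approach is to identify the multiplicative group $1+p^j\OO_K$ with the additive group $p^j \OO_K$ via the $p$-adic exponential and logarithm, reducing the statement to a routine check on a free $\ZZ_p$-module.

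First I would verify that the hypothesis $j \geq 1$ (respectively $j \geq 2$ when $p=2$) is exactly what is needed for $j > 1/(p-1)$, which is the standard convergence threshold for the $p$-adic series $\exp$ and $\log$. This gives mutually inverse, continuous maps $\exp : p^j \OO_K \to 1+p^j \OO_K$ and $\log : 1 + p^j \OO_K \to p^j \OO_K$. Since $p^j \OO_K$ is commutative, the identity $\exp(x+y) = \exp(x)\exp(y)$ holds there, so $\exp$ is in fact an isomorphism of topological groups.

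Next, the additive group $p^j \OO_K$ is isomorphic to $\OO_K \cong \ZZ_p^d$ as a topological $\ZZ_p$-module. As remarked after Proposition~\ref{satpv}, $\ZZ_p^d$ is uniform of rank $d$, so transporting this structure through $\exp$ shows that $1 + p^j \OO_K$ is uniform of rank $d$.

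Finally, for the description of the subgroups: by definition $(1+p^j \OO_K)_i = \{g^{p^i} : g \in 1+p^j \OO_K\}$, and under $\log$ the operation of raising to the $p^i$-th power in $1+p^j \OO_K$ corresponds to multiplication by $p^i$ in $p^j \OO_K$. Hence $(1+p^j \OO_K)_i$ corresponds to $p^i \cdot p^j \OO_K = p^{i+j} \OO_K$, whose image under $\exp$ is $1 + p^{i+j} \OO_K$. No real obstacle arises in this argument; the only subtle point is the convergence bound for $\exp$, which explains precisely why the case $p=2$ forces $j \geq 2$.
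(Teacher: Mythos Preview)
Your proof is correct and is essentially the same as the paper's: the paper simply states that $x \mapsto \log_p(x)/p^j$ gives an isomorphism of pro-$p$ groups $1+p^j\OO_K \to \OO_K$, which is exactly the logarithm/exponential identification you carry out in more detail. Your remark on the convergence bound $j > 1/(p-1)$ is the right explanation for the hypothesis on $j$.
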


\begin{proof}
The map $1+p^j\OO_K \to \OO_K$, given by $x \mapsto p^{-j} \cdot \log_p(x-1)$, is an
isomorphism of pro-$p$ groups taking $1+p^{i+j} \OO_K$ to $p^i\OO_K$.
\end{proof}

Recall that $d=[K:\Qp]$, that $f=[k:\Fp]$, and that $q=p^f$.

\begin{prop}
\label{enpsh}
We have $\efont_n^+ = (\efont_n^+)^{1+p^j \OO_K \hyphen d \hyphen \sh, dj-fn,0}$.
\end{prop}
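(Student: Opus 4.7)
The plan is to establish the nontrivial inclusion $\efont_n^+ \subseteq (\efont_n^+)^{1+p^j \OO_K \hyphen d \hyphen \sh, dj-fn, 0}$; the reverse inclusion is tautological. Combining lemma \ref{mshalt} with the identification $(1+p^j\OO_K)_i = 1+p^{i+j}\OO_K$ from lemma \ref{sbokc}, what has to be shown is that for every $f \in \efont_n^+$, every $i \geq 0$, and every $a \in 1+p^{i+j}\OO_K$,
\[ \vy\bigl(a\cdot f - f\bigr) \geq p^{dj-fn}\cdot p^{di} = p^{d(i+j)-fn}. \]

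The first step is to compute $\vy([a](Z) - Z)$, where $Z = Y^{1/q^n}$, so that $\efont_n^+ = k\dcroc{Z}$ and $\vy(Z) = q^{-n} = p^{-fn}$. Lemma \ref{diflt}, applied with the variable $Y$ (for which $\vy(Y) = 1$), gives $\vy([a](Y)-Y) \geq p^{d(i+j)}$ for $a \in 1+p^{i+j}\OO_K$. Under the substitution $Y \mapsto Z = Y^{1/q^n}$, the $Y$-adic valuation of any power series is rescaled by the factor $p^{-fn}$, which yields $\vy([a](Z)-Z) \geq p^{d(i+j)-fn}$.

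The second step is to propagate this estimate from the generator $Z$ to an arbitrary $f \in k\dcroc{Z}$. In $k\dcroc{X,Y}$ one has the universal factorization $f(X) - f(Y) = (X-Y)\,h_f(X,Y)$ with $h_f \in k\dcroc{X,Y}$; explicitly, if $f = \sum_k c_k Z^k$ then $h_f = \sum_{r,s \geq 0} c_{r+s+1}\, X^r Y^s$. Substituting $X = [a](Z)$ and $Y = Z$, both of positive $\vy$-valuation, produces an element $h_f([a](Z),Z) \in k\dcroc{Z}$ of non-negative valuation, so
\[ \vy\bigl(a\cdot f - f\bigr) = \vy\bigl([a](Z)-Z\bigr) + \vy\bigl(h_f([a](Z),Z)\bigr) \geq p^{d(i+j)-fn}, \]
as required.

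There is no real obstacle in this argument; it amounts to a careful change-of-variable combined with lemma \ref{diflt}. The only arithmetic subtlety is keeping track of the rescaling of $\vy$ under $Y \mapsto Y^{1/q^n}$, which is precisely what produces the correction $-fn$ in the super-Hölder exponent $\lambda = dj - fn$. As an alternative, one could verify the bound directly on the generator $Z$, extend to monomials $Z^k$ via prop \ref{propmsh}, and then invoke closedness (lemma \ref{mshclo}) to reach all power series in $k\dcroc{Z}$.
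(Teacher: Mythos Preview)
Your proof is correct and proceeds essentially as the paper does: both arguments first obtain the key estimate $\vy([a](Y^{1/q^n})-Y^{1/q^n}) \geq p^{d(i+j)-fn}$ from lemma \ref{diflt} together with the $q^{-n}$ rescaling, and then propagate it to all of $\efont_n^+$. For that second step the paper uses precisely the alternative you mention at the end (prop \ref{propmsh} and lemma \ref{mshclo}); your primary route via the factorization $f(X)-f(Y)=(X-Y)\,h_f(X,Y)$ is an equally valid, slightly more hands-on substitute.
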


\begin{proof}
If $b \in \OO_K$ and $i,j \geq 0$, then by lemma \ref{diflt}, we have 
\[ \vy([1+p^{i+j} b](Y^{1/q^n})-Y^{1/q^n}) \geq 1/q^n \cdot p^{d(i+j)} = p^{dj-fn} \cdot p^{di}. \]
Lemma \ref{sbokc} then implies that $Y^{1/q^n} \in (\efont_n^+)^{1+p^j \OO_K \hyphen d \hyphen \sh, dj-fn,0}$. The lemma now follows from prop \ref{propmsh} and lemma \ref{mshclo}.
\end{proof}

\begin{coro}
\label{egsk}
We have $\efont = \efont^{1+p^j \OO_K \hyphen d \hyphen \sh,dj}$.
\end{coro}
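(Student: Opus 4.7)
The corollary is essentially a formal consequence of the preceding proposition specialized to $n=0$, combined with the ring-theoretic stability properties established in prop \ref{propmsh}. The plan is to first obtain the statement on the subring of integers $\efont^+$ directly from prop \ref{enpsh}, and then to extend to the full Laurent series field $\efont = k\dpar{Y}$ by inverting $Y$ inside the ring of super-Hölder vectors.

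More concretely, I would proceed as follows. Applying prop \ref{enpsh} with $n=0$ gives $\efont^+ = (\efont^+)^{1+p^j \OO_K \hyphen d \hyphen \sh, dj, 0}$, so in particular $\efont^+ \subset \efont^{1+p^j \OO_K \hyphen d \hyphen \sh, dj}$. Since $Y \in \efont^+$ is a unit in the field $\efont$ with $\vy(Y)=1$, prop \ref{propmsh}(2) applied inside $M=\efont$ yields $Y^{-1} \in \efont^{1+p^j \OO_K \hyphen d \hyphen \sh, dj, -2}$, and iterating (either prop \ref{propmsh}(2) on $Y^N$, or repeated application of prop \ref{propmsh}(1)) one obtains $Y^{-N} \in \efont^{1+p^j \OO_K \hyphen d \hyphen \sh, dj}$ for every $N \geq 0$. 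Now an arbitrary $f \in \efont$ can be written as $f = Y^{-N} g$ with $N \geq 0$ and $g \in \efont^+$. Both factors lie in $\efont^{1+p^j \OO_K \hyphen d \hyphen \sh, dj}$ for the same $\lambda = dj$, so after replacing $\mu$ by a common lower bound and taking $v = \min(\vy(Y^{-N}),\vy(g))$, prop \ref{propmsh}(1) produces $f \in \efont^{1+p^j \OO_K \hyphen d \hyphen \sh, dj}$. This establishes the nontrivial inclusion $\efont \subset \efont^{1+p^j \OO_K \hyphen d \hyphen \sh, dj}$; the reverse inclusion is tautological since super-Hölder vectors of $\efont$ are by definition elements of $\efont$.

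There is no real obstacle in this argument: all the substantive work has been absorbed into prop \ref{enpsh} (which provides Hölder control of the $\OO_K^\times$-action on $\efont^+$ via the Lubin-Tate estimate of lemma \ref{diflt}) and into prop \ref{propmsh} (which packages the closure of super-Hölder vectors under multiplication and inversion). The only minor bookkeeping is to track how the additive constant $\mu$ shifts when passing from $Y$ to $Y^{-N}$ and when multiplying by $g$, but this is invisible in the corollary's statement since one takes the union over $\mu$.
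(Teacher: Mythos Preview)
Your argument is correct and follows exactly the route the paper indicates: specialize prop \ref{enpsh} to $n=0$ and then use prop \ref{propmsh} to pass from $\efont^+$ to $\efont$ by inverting $Y$. The paper's one-line proof is precisely what you have unpacked.
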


\begin{proof}
This follows from prop \ref{enpsh} with $n=0$, and prop \ref{propmsh}.
\end{proof}

\begin{prop}
\label{levelzer}
If $\eps>0$, then $k \dcroc{Y}^{1+p^j \OO_K \hyphen d \hyphen \sh, dj + \eps} \subset k \dcroc{Y^p}$. 
\end{prop}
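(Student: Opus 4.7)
Proof plan. Suppose for contradiction that $f=\sum_{n\geq 0}a_nY^n\in k\dcroc{Y}$ lies in the super-H\"older space on the left-hand side but is \emph{not} in $k\dcroc{Y^p}$. Then the set of $n\geq 1$ with $p\nmid n$ and $a_n\neq 0$ is nonempty; let $n_0$ be its smallest element. My goal is to test the super-H\"older condition against a carefully chosen sequence of group elements that detect the coefficient $a_{n_0}$ to leading order.

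By lemma \ref{sbokc}, $(1+p^j\OO_K)_i=1+p^{i+j}\OO_K=1+\pi^{e(i+j)}\OO_K$ (writing $p=u\pi^e$). The plan is to take $g_i=1+\pi^{e(i+j)}\in(1+p^j\OO_K)_i$ for each $i\geq 0$ and to compute $\vy(g_i\cdot f-f)$ precisely. By lemma \ref{diflt}, setting $m=e(i+j)$,
\[ [g_i](Y)=Y+Y^{q^m}+\bigO(Y^{q^m+1})=Y\bigl(1+Y^{q^m-1}+\bigO(Y^{q^m})\bigr). \]
Raising to the $n$-th power in characteristic $p$: if $p\nmid n$, then $[g_i](Y)^n-Y^n=nY^{n+q^m-1}+\bigO(Y^{n+q^m})$, of valuation exactly $n+q^m-1$; if on the other hand $p^a\|n$ with $a\geq 1$, then the expansion of $(1+X)^n$ kills all terms up to order $p^a$, so $[g_i](Y)^n-Y^n$ has valuation at least $n+p^a(q^m-1)\geq n+p(q^m-1)$. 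For $i$ large, $p(q^m-1)>q^m$, so the contributions with $p\mid n$, as well as those with $p\nmid n$ but $n>n_0$, have strictly larger valuation than $n_0+q^m-1$. Since the powers of $Y$ appearing in the leading terms across different $n$ are all distinct, no cancellation occurs, and we conclude
\[ \vy(g_i\cdot f-f)=n_0-1+q^{e(i+j)}=n_0-1+p^{d(i+j)}=n_0-1+p^{dj}\cdot p^{di} \]
for all sufficiently large $i$ (using $d=ef$, $q=p^f$).

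Now the assumption $f\in k\dcroc{Y}^{1+p^j\OO_K\hyphen d\hyphen\sh,dj+\eps}$ gives some $\mu\in\RR$ with $\vy(g\cdot f-f)\geq p^{dj+\eps}\cdot p^{di}+\mu$ for every $g\in(1+p^j\OO_K)_i$ and every $i\geq 0$. Applied to $g=g_i$, this forces
\[ n_0-1-\mu\;\geq\;(p^{dj+\eps}-p^{dj})\,p^{di}\;=\;p^{dj}(p^{\eps}-1)\,p^{di}. \]
Since $\eps>0$, the right-hand side tends to $+\infty$ with $i$, while the left-hand side is a fixed constant. This contradiction shows that $a_n=0$ for every $n$ with $p\nmid n$, i.e.\ $f\in k\dcroc{Y^p}$.

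The only real work is the leading-term analysis of $[g_i](Y)^n-Y^n$ in characteristic $p$, and in particular the verification that terms with $p\mid n$ (or with $n>n_0$) do not interfere with the $n_0$-contribution for $i$ large; this is the step I expect a reader to want checked most carefully, but it is a routine Lucas/binomial calculation once lemma \ref{diflt} is in hand. Everything else is bookkeeping with the identity $d=ef$ and lemma \ref{sbokc}.
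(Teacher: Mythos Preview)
Your proof is correct and follows essentially the same strategy as the paper's: test against $g_i=1+\pi^{e(i+j)}$, show that $\vy(g_i\cdot f-f)=p^{dj}\cdot p^{di}+\text{const}$ for $i\gg 0$, and contradict the super-H\"older bound with exponent $dj+\eps$. The paper packages your term-by-term binomial analysis via the Taylor expansion $f(Y+U)=f(Y)+U\cdot f'(Y)+U^2\cdot h(Y,U)$, so that your constant $n_0-1$ appears there as $\vy(f'(Y))$; otherwise the arguments are identical.
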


\begin{proof}
Take $f(Y) \in k \dcroc{Y}$. There is a power series $h(T,U) \in k \dcroc{T,U}$ such that 
\[ f(T+U) = f(T) + U \cdot f'(T) + U^2 \cdot h(T,U). \]
If $m \geq 0$, lemma \ref{diflt} implies that $[1+\pi^m](Y) = Y + Y^{q^m} + \bigO(Y^{q^m+1})$. Therefore,
\[ f([1+\pi^m](Y)) = f(Y) + (Y^{q^m} + \bigO(Y^{q^m+1})) \cdot f'(Y) + \bigO(Y^{2 q^m}). \]
If $f(Y) \notin k \dcroc{Y^p}$, then $f'(Y) \neq 0$. Let $\mu=\vy(f'(Y))$. The above computations imply that $\vy(f([1+\pi^{ei+ej}](Y)) - f(Y)) = p^{dj} \cdot p^{di}+\mu$ for $i \gg 0$. 

This implies the claim, since $\pi^e \OO_K = p \OO_K$.
\end{proof}

\begin{coro}
\label{etshlevel}
We have $\efont_\infty^{1+p^j \OO_K \hyphen d \hyphen \sh, dj-fn} = \efont_n$.
\end{coro}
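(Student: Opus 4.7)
The plan is to prove the two inclusions separately. For $\efont_n \subset \efont_\infty^{1+p^j\OO_K\hyphen d\hyphen\sh, dj-fn}$, I would combine Proposition~\ref{enpsh} (which places $\efont_n^+$ inside the super-H\"older vectors, with $\mu=0$) with Proposition~\ref{propmsh}(2), the latter used to invert the uniformizer $Y^{1/q^n}$ of $\efont_n^+$ so as to cover all of $\efont_n$.

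For the reverse inclusion, take $u \in \efont_\infty^{1+p^j\OO_K\hyphen d\hyphen\sh, dj-fn}$ and choose the smallest $m$ with $u \in \efont_m$; I may assume $m \geq n$, as otherwise there is nothing to prove. I would then work in the Lubin-Tate coordinate $Z = Y^{1/q^m}$ on $\efont_m = k\dpar{Z}$: since $\val_Z = q^m \cdot \vy$ on $\efont_m$, the element $u$ is super-H\"older with $d$-constant $dj + f(m-n)$ in the valuation $\val_Z$, and the goal becomes showing that $u \in k\dpar{Z^{q^{m-n}}} = \efont_n$. Note that $Z^{q^{m-n}} = Y^{1/q^n}$ lies in $\efont_n^+$ and, by Proposition~\ref{enpsh}, has the same $\val_Z$-super-H\"older constant $dj+f(m-n)$; multiplying $u$ by $Z^{q^{m-n} N}$ for $N$ sufficiently large and invoking Proposition~\ref{propmsh}(1), I may assume $u \in k\dcroc{Z}$, and it then suffices to show that $u \in k\dcroc{Z^{q^{m-n}}}$.

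The technical heart is the following iterated version of Proposition~\ref{levelzer}, which I would establish by induction on an integer $r \geq 0$: if $g \in k\dcroc{Z}$ is super-H\"older for $1+p^j\OO_K$ with $d$-constant $\geq dj + r$ in $\val_Z$, then $g \in k\dcroc{Z^{p^r}}$. The case $r = 0$ is vacuous and the case $r = 1$ is Proposition~\ref{levelzer} applied with $\eps = 1$. For $r \geq 2$, Proposition~\ref{levelzer} already puts $g$ in $k\dcroc{Z^p}$; since $k$ is perfect, I can write $g = h^p$ with $h \in k\dcroc{Z}$. The $\OO_K^\times$-action is by composition with $[a]$, so it commutes with the $p$-th power map: $a \cdot h^p = (a \cdot h)^p$. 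In characteristic $p$ this gives $\val_Z(a \cdot g - g) = p \cdot \val_Z(a \cdot h - h)$, so $h$ itself is super-H\"older with $d$-constant $\geq dj + r - 1$. The induction hypothesis places $h$ in $k\dcroc{Z^{p^{r-1}}}$, whence $g = h^p \in k\dcroc{Z^{p^r}}$. Applying this to $u$ with $r = f(m-n)$ yields $u \in k\dcroc{Z^{p^{f(m-n)}}} = k\dcroc{Z^{q^{m-n}}} = \efont_n^+$, and reinserting the factor $Z^{-q^{m-n} N}$ (which itself lies in $\efont_n$) gives $u \in \efont_n$, as desired.

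The main obstacle I expect is bridging the gap between the single-step Frobenius descent $Z \mapsto Z^p$ furnished by Proposition~\ref{levelzer} and the $q^{m-n}$-fold descent $Z \mapsto Z^{q^{m-n}}$ required by the statement. The iteration $g \mapsto h = g^{1/p}$ above makes this possible; it relies on the fact that the $\OO_K^\times$-action, being a composition action, commutes with the ordinary $p$-th power Frobenius on $k\dcroc{Z}$. The remaining bookkeeping point is keeping the super-H\"older constants aligned under the coordinate change $Y \leftrightarrow Z$ and under multiplication by powers of $Z^{q^{m-n}}$.
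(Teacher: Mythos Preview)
Your proof is correct and follows essentially the same strategy as the paper's: both arguments iterate Proposition~\ref{levelzer} using the fact that the $p$-th power map shifts the super-H\"older parameter $\lambda$ by~$1$. The paper phrases the descent by raising $f(Y^{1/p^m})$ to the $p^m$-th power to land in $k\dcroc{Y}$ and then reducing $m$ by one, whereas you stay in $k\dcroc{Z}$ and take $p$-th roots; these are the same computation viewed from opposite ends.
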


\begin{proof}
We prove that, more generally, $\efont_\infty^{1+p^j \OO_K \hyphen d \hyphen \sh, dj-\ell} = k \dpar{Y^{1/p^\ell}}$. 
Take $f(Y^{1/p^m}) \in (\efont^+_\infty)^{1+p^j \OO_K \hyphen d \hyphen \sh, dj-\ell}$
where $f(Y) \in k \dcroc{Y}$. Since $\vy(h^p) = p \cdot \vy(h)$ for all
$h \in \et^+$, we have $f^{p^m}(Y) \in (\efont^+_\infty)^{1+p^j \OO_K \hyphen d \hyphen \sh, dj-\ell+m}$, 
where $f^{p^m}(Y) \in \kf \dcroc{Y}$ is 
$f^{p^m}(Y) = f(Y^{1/p^m})^{p^m}$. If $m \geq \ell+1$, then prop
\ref{levelzer} implies that $f^{p^m}(Y) \in \kf \dcroc{Y^p}$, so that
$f(Y) = g(Y^p)$, and $f(Y^{1/p^m}) = g(Y^{1/p^{m-1}})$. 
This implies the
claim.
\end{proof}

\subsection{Decompletion of $\et$}
\label{sublt}

Since we use the results of \S\ref{subdecfon}, we once more assume that $p \neq 2$.
Let $\et$ denote the $Y$-adic completion of $\efont_\infty$. 

\begin{theo}
\label{decprec}
We have $\et^{1+p^j \OO_K \hyphen d \hyphen \sh,dj} = \efont$, and $\et^{d\hyphen\sh} = \efont_\infty$.
\end{theo}

\begin{proof}
Let $K_\infty = K(\LT[\pi^\infty])$ denote the extension of $K$ generated by the torsion points of $\LT$, and let $\Gamma = \Gal(K_\infty/K)$. The Lubin-Tate character $\chi_\pi$ gives rise to an isomorphism $\chi_\pi : \Gamma \to \OO_K^\times$. For $n \geq 1$, let $K_n = K(\LT[\pi^n])$. If $(\pi_n)_{n \geq 1}$ is a compatible sequence of primitive $\pi^n$-torsion points of $\LT$, then $\pi_n$ is a uniformizer of $\OO_{K_n}$, $\varpi=(\pi_n)_{n \geq 0}$ belongs to $\projlim_{\Nm_{K_n/K_{n-1}}} \OO_{K_n}$, and $X_K(K_\infty) = k \dpar{\varpi}$ by theorem \ref{wintheo}. If $g \in \Gamma$, then $g(\varpi) = [\chi_\pi(g)](\varpi)$, so that if we identify $\Gamma$ and $\OO_K^\times$, then $X_K(K_\infty) = \efont$ with its action of $\OO_K^\times$. Prop \ref{perfcomp} implies that $\et = \et_{K_\infty}$ as valued fields with an action of (an open subgroup of) $\OO_K^\times$. We can therefore apply theorem \ref{decfonsh}, and get $(\et^+)^{d\hyphen\sh} = \efont^+_\infty$. This implies the second statement. The first one then follows from coro \ref{etshlevel}.
\end{proof}

\begin{rema}
\label{cofin}
In the above proof, note that $K_\infty^{1+p^n \OO_K} = K_{ne}$, so that the numbering is not the same as in \S\ref{subdefon}.
\end{rema}

\begin{rema}
\label{gamodlt}
We can define Lubin-Tate $\Gamma$-modules over $\efont$ as in \S 3.2 of \cite{BR22}. The results proved in that section carry over to the Lubin-Tate setting without difficulty.
\end{rema}

In theorem 2.9 of \cite{BR22}, we proved theorem \ref{decprec} above in the cyclotomic case, using Tate traces. There are no such Tate traces in the Lubin-Tate case if $K \neq \Qp$. We now explain why this is so. More precisely, we prove that there is no $\Gamma$-equivariant $k$-linear projector $\et \to \efont$ if $K \neq \Qp$. Choose a coordinate $T$ on $\LT$ such that $\log_{\LT}(T) = \sum_{n \geq 0} T^{q^n}/\pi^n$, so that $\log'_{\LT}(T) \equiv 1 \bmod{\pi}$. Let $\partial = 1/\log'_{\LT}(T) \cdot d/dT$ be the invariant derivative on $\LT$. Let $\phi_q=\phi^f$ where $q=p^f$.

\begin{lemm}
\label{ltder}
We have $d \gamma(Y)/dY \equiv \chi_\pi(\gamma)$ in $\efont$ for all $\gamma \in \Gamma$.
\end{lemm}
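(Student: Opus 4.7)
The plan is to derive the lemma by differentiating the functional equation of the Lubin-Tate logarithm. Working in the specific coordinate introduced just before the lemma, $\log_{\LT}(Y) = \sum_{n\geq 0} Y^{q^n}/\pi^n$ satisfies the defining identity $\log_{\LT}([a](Y)) = a \cdot \log_{\LT}(Y)$ over $\OO_K$ for every $a \in \OO_K$. Differentiating with respect to $Y$ yields
\[ \log'_{\LT}([a](Y)) \cdot [a]'(Y) = a \cdot \log'_{\LT}(Y) \]
as an identity in $\OO_K\dcroc{Y}$.

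I then reduce mod $\pi$. The paragraph preceding the lemma records that $\log'_{\LT}(Y) \equiv 1 \pmod{\pi}$ (this is implicitly using $K \neq \Qp$, since the $\pi$-adic valuation of $q^n/\pi^n$ equals $n([K:\Qp]-1)$, which is $\geq n$ only when $[K:\Qp] \geq 2$). Because $[a](Y) \in Y \cdot \OO_K\dcroc{Y}$, the composition $\log'_{\LT}([a](Y))$ is likewise $\equiv 1 \pmod{\pi}$. The displayed identity therefore reduces to $[a]'(Y) \equiv a \pmod{\pi}$ in $\OO_K\dcroc{Y}$, which in $k\dcroc{Y}$ says that $[a]'(Y)$ is the constant $\bar a \in k$.

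Finally, I apply this to $a = \chi_\pi(\gamma)$ for $\gamma \in \Gamma$. Using the identification $\gamma(Y) = [\chi_\pi(\gamma)](Y)$ from the proof of theorem \ref{decprec}, and the compatibility of formal differentiation with reduction mod $\pi$, this gives $d\gamma(Y)/dY \equiv \chi_\pi(\gamma)$ in $\efont$, as required. I do not anticipate any real obstacle: the key conceptual point is simply that the functional equation is preserved by differentiation, and that the chosen coordinate makes $\log'_{\LT}$ trivial mod $\pi$. The only mild subtlety is to notice that $\log'_{\LT}([a](Y))$ reduces to $1$ as well, which is automatic once one observes that $[a](Y)$ has no constant term.
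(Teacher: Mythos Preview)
Your proof is correct and is essentially the same as the paper's. The paper phrases the argument via the invariant derivation $\partial = (\log'_{\LT})^{-1}\, d/dY$ and the commutation relation $\partial \circ \gamma = \chi_\pi(\gamma)\,\gamma \circ \partial$, applied to $Y$ and reduced mod $\pi$ (where $\partial = d/dY$ since $\log'_{\LT}\equiv 1$); but that commutation relation is nothing other than the chain-rule identity $\log'_{\LT}([a](Y))\cdot [a]'(Y)=a\,\log'_{\LT}(Y)$ that you write down directly, so the two arguments unpack to the same computation.
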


\begin{proof}
Since $\log'_{\LT} \equiv 1 \bmod{\pi}$, we have $\partial = d/dY$ in $\efont$. Applying $\partial \circ \gamma =  \chi_\pi(\gamma) \gamma \circ \partial$ to $Y$, we get the claim.
\end{proof}

\begin{lemm}
\label{pginv}
If $\gamma \in \Gamma$ is nontorsion, then $\efont^{\gamma=1} = k$.
\end{lemm}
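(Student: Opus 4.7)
The plan is to show: any nonzero $\gamma$-fixed $f \in \efont$ lies in $k$. First reduce to the case $\gamma \in 1 + \pi \OO_K$: since $\overline\gamma \in k^\times$ has finite order $r$, the element $\gamma^r$ lies in $1 + \pi\OO_K$ and remains nontorsion, and any $\gamma$-fixed element is $\gamma^r$-fixed. Next, by replacing $f$ with $1/f$ if necessary (both are $\gamma$-fixed in the field $\efont$), we may assume $f \in k\dcroc{Y}$.

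The key step is to prove $f'(Y) = 0$, i.e., $f \in k\dcroc{Y^p}$. Since $1 + \pi\OO_K$ is pro-$p$ and $\gamma$ is nontorsion, $\gamma^{p^n} \to 1$ but $\gamma^{p^n} \neq 1$, so $i_n := v_\pi(\gamma^{p^n} - 1) \to \infty$. Writing $\gamma^{p^n} = 1 + \pi^{i_n} u_n$ with $u_n \in \OO_K^\times$, the same computation as in the proof of lemma \ref{diflt} (using $[\pi](Y) = Y^q$ together with the Frobenius identity $(a+b)^{q^{i_n}} = a^{q^{i_n}} + b^{q^{i_n}}$ in characteristic $p$) gives $[\gamma^{p^n}](Y) = Y + \delta_n$ with $\delta_n = \overline u_n^{q^{i_n}} Y^{q^{i_n}} + \bigO(Y^{q^{i_n}+1})$; in particular $v_Y(\delta_n) = q^{i_n}$ with leading coefficient in $k^\times$. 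For $f \in k\dcroc{Y}$, the formal Taylor (Hasse) expansion yields
\[ f([\gamma^{p^n}](Y)) - f(Y) = \delta_n \cdot f'(Y) + \sum_{k \geq 2} (D^k f)(Y) \cdot \delta_n^k. \]
If $f' \neq 0$, set $\mu = v_Y(f')$: the linear term has valuation exactly $q^{i_n} + \mu$ with nonzero leading coefficient in $k^\times$, while each term with $k \geq 2$ has valuation $\geq 2 q^{i_n}$ (since $D^k f \in k\dcroc{Y}$). For $n$ large enough that $q^{i_n} > \mu$, the linear term strictly dominates and is nonzero, contradicting $\gamma^{p^n} \cdot f = f$. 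Hence $f' = 0$.

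Finally, iterate via Frobenius. Since $k$ is perfect, $k\dcroc{Y^p} = k\dcroc{Y}^p$, so $f = h^p$ for some $h \in k\dcroc{Y}$; the injectivity of $x \mapsto x^p$ on the field $\efont$ then forces $h$ to be $\gamma$-fixed. Repeating the previous step yields $h = g^p$ with $g$ again $\gamma$-fixed, and so on indefinitely. Therefore $f \in \bigcap_{n \geq 0} k\dcroc{Y}^{p^n} = \bigcap_{n \geq 0} k\dcroc{Y^{p^n}} = k$, as desired. The main obstacle is the first step; the trick is to replace $\gamma$ by a sufficiently high $p$-power so that the displacement $v_Y([\gamma^{p^n}](Y) - Y) = q^{i_n}$ eventually exceeds $v_Y(f')$, forcing the linear term in the Hasse expansion to dominate.
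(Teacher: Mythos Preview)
The paper states this lemma without proof, so there is nothing to compare against directly. Your argument is correct and self-contained.

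The two preliminary reductions are fine. Passing from $\gamma$ to $\gamma^r$ (with $r$ the order of $\overline\gamma$ in $k^\times$) lands you in $1+\pi\OO_K$ while preserving nontorsion, and $\efont^{\gamma=1}\subset\efont^{\gamma^r=1}$. For the second reduction, once you have shown that every $\gamma$-fixed element of $k\dcroc{Y}$ lies in $k$, the case $\vy(f)<0$ gives $1/f\in k^\times$ and hence $f\in k^\times$ as well, so the replacement $f\mapsto 1/f$ is harmless.

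The core step---expanding $f([\gamma^{p^n}](Y))=f(Y+\delta_n)$ via Hasse derivatives and observing that if $f'\neq 0$ then the linear term $\delta_n\cdot f'(Y)$, of exact valuation $q^{i_n}+\mu$, eventually dominates the tail of valuation $\geq 2q^{i_n}$---is precisely the computation carried out in the proof of Proposition~\ref{levelzer}. Your argument is thus entirely in the spirit of the paper's methods, applied in the limiting case where a super-H\"older estimate is replaced by exact invariance. The expansion $[\gamma^{p^n}](Y)=Y+\overline{u_n}^{\,q^{i_n}}Y^{q^{i_n}}+\bigO(Y^{q^{i_n}+1})$ follows from $[\pi](Y)=Y^q$ and $S(T,U)=T+U+TU\cdot R(T,U)$ exactly as in Lemma~\ref{diflt}, and $i_n\to\infty$ holds because $1+\pi\OO_K$ is pro-$p$.

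The Frobenius descent in the final step is standard: $k$ perfect gives $k\dcroc{Y^p}=k\dcroc{Y}^p$, injectivity of Frobenius on $\efont$ transfers $\gamma$-invariance to the $p$-th root, and $\bigcap_{n\geq 0} k\dcroc{Y^{p^n}}=k$.
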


\begin{prop}
\label{noltrace}
If $K \neq \Qp$, there is no $\Gamma$-equivariant map $R : \efont \to \efont$ such that $R(\phi_q(f)) = f$ for all $f \in \efont$.
\end{prop}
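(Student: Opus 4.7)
The plan is, assuming for contradiction that such a $k$-linear map $R$ exists, to evaluate it at $Y^{p^{f-1}}$ and extract a functional equation strong enough to force a contradiction via valuation asymptotics.

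First, I would establish a key functional equation. For $\gamma \in \Gamma$ with $\chi_\pi(\gamma) = a \in 1 + \pi\OO_K$ (so $\bar a = 1$), lemma~\ref{ltder} gives $[a](Y) = Y + s_a(Y)$ with $s_a \in k\dcroc{Y^p}$. Applying the characteristic-$p$ Frobenius identity $(x+y)^{p^{f-1}} = x^{p^{f-1}} + y^{p^{f-1}}$,
\[
\gamma(Y^{p^{f-1}}) = [a](Y)^{p^{f-1}} = Y^{p^{f-1}} + s_a(Y)^{p^{f-1}},
\]
where crucially $s_a^{p^{f-1}} \in (k\dcroc{Y^p})^{p^{f-1}} \subset k\dcroc{Y^{p^f}} = \phi_q(\efont^+)$. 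Setting $v_* := R(Y^{p^{f-1}})$, applying $R$, and using $\Gamma$-equivariance of $R$ together with $R \circ \phi_q = \id$ on the $\phi_q$-image term, I would obtain
\[
(\gamma - 1)(v_*) = \phi_q^{-1}\bigl(s_a^{p^{f-1}}\bigr).
\]

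Next, I would specialize to $\gamma_n \in \Gamma$ with $\chi_\pi(\gamma_n) = 1 + \pi^n$. By lemma~\ref{diflt}, $s_{\gamma_n} = Y^{q^n} + \bigO(Y^{q^n+1})$, so $\phi_q^{-1}(s_{\gamma_n}^{p^{f-1}}) = Y^{p^{fn-1}} + \bigO(Y^{p^{fn-1}+1})$, a series with leading coefficient $1$ at valuation $p^{fn-1}$.

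Finally, I would derive the contradiction from valuation analysis. Write $v_* = \sum_{m \geq m_0} v_m Y^m$ (a Laurent series with leading index $m_0 \in \ZZ$; if $v_* = 0$ the equation fails outright). Expanding $(\gamma_n Y^m - Y^m)$ via $\gamma_n Y = Y + Y^{q^n} + \bigO(Y^{q^n+1})$, any monomial $v_m Y^m$ contributing to the coefficient of $Y^{p^{fn-1}}$ in $(\gamma_n - 1)(v_*)$ must satisfy $m \leq p^{fn-1}(1-p) + 1$ (the maximum is attained by the linear term with $p \nmid m$; all $p$-power and higher-order contributions yield stricter bounds). Since $p^{fn-1}(1-p) + 1 \to -\infty$ as $n \to \infty$, for $n$ sufficiently large every contributing index is below $m_0$, so $v_m = 0$ throughout and the $Y^{p^{fn-1}}$-coefficient of the left-hand side vanishes—contradicting the right-hand side.

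The main obstacle is step one, specifically the inclusion $s_a^{p^{f-1}} \in \phi_q(\efont^+)$. This reduces to $s_a \in k\dcroc{Y^p}$, i.e. $\partial s_a = 0$, which is exactly lemma~\ref{ltder}. That lemma in turn depends on $\log'_\LT \equiv 1 \pmod \pi$, and this is precisely where the hypothesis $K \neq \Qp$ is essential: for $K = \Qp$ the congruence fails, consistent with the existence of Tate traces in the cyclotomic case.
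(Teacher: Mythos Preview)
Your argument is correct and shares its essential setup with the paper's: both single out the test element $Y^{q/p}=Y^{p^{f-1}}$ and use lemma~\ref{ltder} (for $\chi_\pi(\gamma)\in 1+\pi\OO_K$) to see that $(\gamma-1)(Y^{q/p})\in\phi_q(\efont)$. The divergence is in the endgame. The paper argues structurally: from the putative $R$ it first proves the implication ``$(1-\gamma)f\in\phi_q(\efont)\Rightarrow f\in\phi_q(\efont)$'' via the splitting $f=(f-\phi_q R(f))+\phi_q R(f)$ together with lemma~\ref{pginv}, and then observes that $f=Y^{q/p}$ violates it. You instead pass directly to the explicit equation $(\gamma_n-1)\,R(Y^{p^{f-1}})=\phi_q^{-1}\bigl(s_{\gamma_n}^{p^{f-1}}\bigr)$ for the family $\chi_\pi(\gamma_n)=1+\pi^n$ and kill it by a coefficient chase: since $\vy\bigl((\gamma_n-1)Y^m\bigr)\geq m+q^n-1$, only indices $m\leq p^{fn-1}(1-p)+1\to-\infty$ could hit degree $p^{fn-1}$, contradicting the unit leading coefficient on the right. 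Your route bypasses lemma~\ref{pginv} entirely at the price of a slightly longer asymptotic computation; the paper's route is shorter and more conceptual but needs that auxiliary lemma. Your closing remark on where $K\neq\Qp$ enters (namely through $\log'_{\LT}\equiv 1\bmod\pi$, hence lemma~\ref{ltder}) is exactly right and matches how the paper uses it. One minor point: the proposition says ``map'' while both you and the paper tacitly use additivity of $R$; your explicit $k$-linearity assumption is harmless and is the intended reading (cf.\ corollary~\ref{noproj}).
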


\begin{proof}
Suppose that such a map exists, and take $\gamma \in \Gamma$ nontorsion and such that $\chi_\pi(\gamma) \equiv 1 \bmod{\pi}$. We first show that if $f \in \efont$ is such that $(1- \gamma) f \in \phi_q(\efont)$, then $f \in \phi_q(\efont)$. Write $f=f_0 + \phi_q( R(f))$ where $f_0 = f- \phi_q(R(f))$, so that $R(f_0)=0$ and $(1- \gamma) f_0  = \phi_q(g) \in \phi_q(\efont)$. Applying $R$, we get $0 = (1- \gamma) R(f_0) = g$. Hence $g=0$ so that $(1- \gamma) f_0 = 0$. Since $\efont^{\gamma=1} = k$ by lemma \ref{pginv}, this implies $f_0\in k$, so that $f \in \phi_q(\efont)$.

However, lemma \ref{ltder} and the fact that $\chi_\pi(\gamma) \equiv 1 \bmod{\pi}$ imply that $\gamma(Y) = Y + f_\gamma(Y^p)$ for some $f_\gamma \in \efont$, so that $\gamma(Y^{q/p}) = Y^{q/p} + \phi_q(g_\gamma)$. Hence $(1-\gamma)(Y^{q/p}) \in \phi_q(\efont)$ even though $Y^{q/p}$ does not belong to $\phi_q(\efont)$. Therefore, no such map $R$ can exist.
\end{proof}

\begin{coro}
\label{noproj}
If $K \neq \Qp$, there is no $\Gamma$-equivariant $k$-linear projector $\phi_q^{-1}(\efont) \to \efont$. 
A fortiori, there is no $\Gamma$-equivariant $k$-linear projector $\et \to \efont$.
\end{coro}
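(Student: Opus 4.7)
The plan is to reduce the nonexistence of a $\Gamma$-equivariant $k$-linear projector $P : \phi_q^{-1}(\efont) \to \efont$ to the preceding Proposition \ref{noltrace}. First I would identify $\phi_q^{-1}(\efont)$ with $\efont_1 = k\dpar{Y^{1/q}}$: the ring $\et$ is perfect (being the $Y$-adic completion of the perfect ring $\efont_\infty$, with Frobenius extending by continuity), $k$ is perfect, and the identity $(\sum b_i Y^{i/q})^q = \sum b_i^q Y^i$ in characteristic $p$ gives $\phi_q^{-1}(\efont) = \efont_1$. In particular, every $f \in \efont$ has a unique $q$-th root $f^{1/q} \in \efont_1$.

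The key step is the following construction. Suppose for contradiction that a $\Gamma$-equivariant $k$-linear projector $P : \efont_1 \to \efont$ exists (so that $P|_\efont = \Id_\efont$). Define $R : \efont \to \efont$ by $R(f) = P(f^{1/q})$. The map $f \mapsto f^{1/q}$ is additive (Frobenius additivity) and $\Gamma$-equivariant (because $\Gamma$ acts by ring automorphisms of $\et$ and $q$-th roots in $\et$ are unique), so $R$ inherits additivity and $\Gamma$-equivariance from $P$. Moreover, for $f \in \efont$ one has
\[ R(\phi_q(f)) = P\bigl((f^q)^{1/q}\bigr) = P(f) = f, \]
so $R$ is a $\Gamma$-equivariant section of $\phi_q$ on $\efont$. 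Since $K \neq \Qp$, this contradicts Proposition \ref{noltrace}.

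The \emph{a fortiori} statement is then immediate: a $\Gamma$-equivariant $k$-linear projector $\et \to \efont$ restricts, via $\efont_1 \subset \et$, to a $\Gamma$-equivariant $k$-linear projector $\efont_1 \to \efont$, and the previous argument applies. I do not anticipate any real obstacle here; once the identification $\phi_q^{-1}(\efont) = \efont_1$ is in hand, the whole corollary follows formally from Proposition \ref{noltrace} by composing a hypothetical projector with the canonical $q$-th root map $\efont \to \efont_1$ to produce the forbidden section of Frobenius.
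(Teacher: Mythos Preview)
Your proof is correct and follows exactly the paper's approach: the paper defines $R = \Pi \circ \phi_q^{-1}$ from a hypothetical projector $\Pi$, which is precisely your $R(f) = P(f^{1/q})$, and then invokes Proposition \ref{noltrace}. The only difference is that you spell out the identification $\phi_q^{-1}(\efont) = \efont_1$ and the $\Gamma$-equivariance of the $q$-th root map, which the paper leaves implicit.
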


\begin{proof}
Given such a projector $\Pi$, we could define $R$ as in prop \ref{noltrace} by $R = \Pi \circ \phi_q^{-1}$.
\end{proof}

\subsection{The perfectoid commutant of $\Aut(\LT)$}
\label{subpercom}

In \S 3.1 of \cite{BR22}, we computed the perfectoid commutant of $\Aut(\Gm)$. We now use theorem \ref{decprec} to do the same for $\Aut(\LT)$.  We still assume that $p \neq 2$.

\begin{theo}
\label{ltcom}
If $u \in \et^+$ is such that $\vy(u) > 0$ and $u \circ  [g] = [g] \circ u$ for all $g \in \OO_K^\times$, there exists $b \in \OO_K^\times$ and $n \in \ZZ$ such that $u(Y) = [b](Y^{q^n})$.
\end{theo}

Recall that a power series $f(Y) \in k\dcroc{Y}$ is separable if $f'(Y) \neq 0$. If $f(Y) \in Y \cdot k \dcroc{Y}$, we say that $f$ is invertible if $f'(0) \in k^\times$, which is equivalent to $f$ being invertible for composition (denoted by $\circ$). We say that $w(Y) \in Y \cdot k \dcroc{Y}$ is nontorsion if $w^{\circ n}(Y) \neq Y$ for all $n \geq 1$. If $w(Y) = \sum_{i \geq 0} w_i Y^i \in k \dcroc{Y}$ and $m \in \ZZ$, let $w^{(m)}(Y) = \sum_{i \geq 0} w_i^{p^m} Y^i$. Note that $(w \circ v)^{(m)} = w^{(m)} \circ v^{(m)}$.

\begin{prop}
\label{lubnarch}
Let $w(Y) \in Y + Y^2\cdot k \dcroc{Y}$ be a nontorsion series, and let $f(Y) \in Y \cdot k \dcroc{Y}$ be a separable power series. If $w^{(m)} \circ f = f \circ w$ for some $m \in \ZZ$, then $f$ is invertible.
\end{prop}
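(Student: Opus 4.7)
The strategy is to argue by contradiction: assume $j := \vy(f) \geq 2$ and compare leading $Y$-valuations on the two sides of the equation $w^{(m)} \circ f = f \circ w$. Write $f(Y) = \sum_{i \geq j} c_i Y^i$ with $c_j \neq 0$; since $f$ is separable, there is a smallest index $\ell$ with $p \nmid \ell$ and $c_\ell \neq 0$, with $\ell \geq j$ and equality if and only if $p \nmid j$. Write $w(Y) = Y + \sum_{n \geq k} a_n Y^n$ with $k \geq 2$ and $a_k \neq 0$; this $k$ is finite because $w$ is nontorsion (so $w \neq Y$).

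The key tool is an analysis of leading terms using Lucas' theorem for binomial coefficients in characteristic $p$: for each $i \geq 1$, $(w(Y))^i - Y^i$ has $Y$-valuation exactly $h(i) := i + (k-1) p^{v_p(i)}$ with nonzero leading coefficient $(i/p^{v_p(i)} \bmod p) \cdot a_k^{p^{v_p(i)}}$. Hence $f(w(Y)) - f(Y) = \sum_i c_i[(w(Y))^i - Y^i]$ has $Y$-valuation at most $h(\ell) = \ell + k - 1$, while $w^{(m)}(f(Y)) - f(Y) = \sum_{n \geq k} a_n^{p^m} f(Y)^n$ has $Y$-valuation at least $jk$, with leading term $a_k^{p^m} c_j^k Y^{jk}$. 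When $p \nmid j$ (so $\ell = j$), every $i > j$ with $c_i \neq 0$ satisfies $h(i) > j+k-1$; the nonzero contribution $j c_j a_k$ of $c_j[(w(Y))^j - Y^j]$ thus gives $f(w(Y)) - f(Y)$ the exact $Y$-valuation $j+k-1$. Matching to the other side yields $(j-1)(k-1) = 0$, hence $j = 1$ since $k \geq 2$, a contradiction.

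When $p \mid j$ (so $\ell > j$), I instead compare the coefficient of $Y^{\ell + k - 1}$ on the two sides. Since $(w(Y))^i$ is a power series in $Y^{p^{v_p(i)}}$ whenever $p \mid i$, one checks that the nonzero contribution $\ell c_\ell a_k$ of $c_\ell$ is not cancelled by the remaining terms, so when $\ell + k - 1 < jk$ this immediately contradicts the vanishing of the RHS at this degree. When $\ell + k - 1 \geq jk$, the hypothesis forces $f \equiv h(Y^p) \pmod{Y^{(j-1)k+1}}$ for some auxiliary series $h$, and the identity $w(Y)^p = w^{(1)}(Y^p)$ valid in characteristic $p$ permits a Frobenius descent producing an analogous functional equation $h \circ w^{(1)} = w^{(m)} \circ h$ at strictly smaller valuation index; iterating finitely many times reduces to the previous case. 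The main obstacle is this Frobenius-descent step: one must carefully verify that $h$ retains separability and that the new equation still falls within the proposition's hypotheses, with $w^{(1)}$ nontorsion in $Y + Y^2 \cdot k \dcroc{Y}$.
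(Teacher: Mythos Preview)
Your argument in the case $p \nmid j$ is correct, but the case $p \mid j$ has genuine gaps in both sub-cases.

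\textbf{The ``one checks'' is not justified.} You claim that the term $\ell c_\ell a_k$ at degree $\ell+k-1$ in $f\circ w - f$ is not cancelled by contributions from indices $i<\ell$. Your observation that $w(Y)^i-Y^i\in k\dcroc{Y^{p^{v_p(i)}}}$ only shows that such a term can contribute to degree $\ell+k-1$ when $p^{v_p(i)}\mid(\ell+k-1)$; it does not show the contribution vanishes. Concretely, take $p=2$, $j=4$, $k=4$, $\ell=11$. Then $\ell+k-1=14$ and $jk=16$, so you are in the sub-case $\ell+k-1<jk$. For $i=6$ one has $h(6)=12\leq 14$ and $2\mid 14$, and a short computation gives that the $Y^{14}$-coefficient of $w^6-Y^6$ is $a_5^2$. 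So the $Y^{14}$-coefficient of $f\circ w-f$ is $c_{11}a_4+c_6 a_5^2+\cdots$, which may perfectly well vanish. Nothing in your set-up excludes this.

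\textbf{The Frobenius descent does not close.} In the sub-case $\ell+k-1\geq jk$ you only know $f\equiv h(Y^p)\pmod{Y^\ell}$ with $\ell\geq (j-1)k+1$. Running your computation yields only $h\circ w^{(1)}\equiv w^{(m)}\circ h\pmod{T^{\lceil \ell/p\rceil}}$, a congruence rather than an identity. When $j=p$ (e.g.\ $p=j=2$, $k=2$, $\ell=3$) this congruence reads $c_2\,w^{(1)}(T)\equiv w^{(m)}(c_2 T)\pmod{T^2}$, which is vacuous; there is nothing to iterate on. Moreover the ``new $h$'' is not canonically defined as a separable power series satisfying an exact functional equation, so you cannot feed it back into the proposition.

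\textbf{What the paper does instead.} The paper avoids any case split on $v_p(j)$ by exploiting the nontorsion hypothesis much more strongly than you do (you use it only to guarantee $w\neq Y$). Since $(w^{(m)})^{\circ n}=(w^{\circ n})^{(m)}$, the functional equation persists after replacing $w$ by any iterate $w^{\circ p^s}$; and because $w$ is nontorsion, one may choose $s$ so that the new leading exponent $r$ of $w-Y$ satisfies $r>\vy(f')$. Then the Taylor expansion $f(Y+U)=f(Y)+Uf'(Y)+U^2\cdot(\ldots)$ with $U=w(Y)-Y$ gives $\vy(f\circ w-f)=r+\vy(f')$ exactly, with no cancellation issue, while $\vy(w^{(m)}\circ f-f)=r\cdot\vy(f)$. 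Equating forces $(\vy(f)-1)r=\vy(f')$, impossible for $r>\vy(f')$ unless $\vy(f)=1$. This one-line trick replaces your entire $p\mid j$ analysis.
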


\begin{proof}
This is a slight generalization of lemma 6.2 of \cite{L94}. Write 
\begin{align*}
f(Y) & = f_n Y^n + \bigO(Y^{n+1}) \\ f'(Y)  &= g_j Y^j + \bigO(Y^{j+1}) \\ w(Y) & = Y + w_r Y^r + \bigO(Y^{r+1}),
\end{align*}
with $f_n,g_j,w_r \neq 0$. Since $w$ is nontorsion, we can replace $w$ by $w^{\circ p^\ell}$ for $\ell \gg 0$ and assume that $r \geq  j+1$. We have 
\begin{align*} w^{(m)} \circ f & = f(Y) + w_r^{(m)} f(Y)^r + \bigO(Y^{n(r+1)})  \\ & = f(Y) + w_r^{(m)} f_n^r Y^{nr} + \bigO(Y^{nr+1}). \end{align*}
If $j=0$, then $n=1$ and we are done, so assume that $j \geq 1$. We have
\begin{align*} f \circ w &= f(Y + w_r Y^r + \bigO(Y^{r+1})) \\ &= f(Y) + w_r Y^r f'(Y)  + \bigO(Y^{2r})  \\ &=  f(Y) + w_r g_j Y^{r+j} + \bigO(Y^{r+j+1}). \end{align*}
This implies that $nr=r+j$, hence $(n-1)r=j$, which is impossible if $r>j$ unless $n=1$. Hence $n=1$ and $f$ is invertible.
\end{proof}

\begin{lemm}
\label{locansh}
If $u \in \et^+$ is such that $\vx(u) > 0$ and $u \circ  [g] = [g] \circ u$ for all $g \in \OO_K^\times$, then $u \in (\et^+)^{d\hyphen\sh}$.
\end{lemm}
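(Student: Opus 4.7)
The plan is to directly estimate the orbit map of $u$ and then invoke lemma \ref{mshalt} to verify the super-H\"older condition.

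First I would translate the commutation hypothesis into a statement about the $\OO_K^\times$-action on $\et^+$. Under the identification of $X_K(K_\infty)$ with $\efont$ used in the proof of theorem \ref{decprec}, we have $g \cdot Y = [g](Y)$ for $g \in \OO_K^\times$. By $Y$-adic continuity, the action extends to $g \cdot v(Y) = v([g](Y))$ for any substitutable $v$, so the hypothesis $u \circ [g] = [g] \circ u$ reads exactly
\[ g \cdot u = [g](u) \quad \text{in } \et^+. \]

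Next I would bound $\vy(g \cdot u - u)$ using lemma \ref{diflt}. Applied with $a = 1$, that lemma gives $\vy([g](Y) - Y) \geq p^{di}$ for every $g = 1 + p^i b \in 1 + p^i \OO_K$, so $[g](Y) - Y \in Y^{p^{di}} k\dcroc{Y}$. Substituting $Y = u$, which converges in the $Y$-adically complete ring $\et^+$ because $\vy(u) > 0$, multiplies $Y$-valuations by $\vy(u)$, hence
\[ \vy(g \cdot u - u) = \vy\bigl([g](u) - u\bigr) \geq p^{di} \cdot \vy(u). \]

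Finally I would pick a uniform subgroup. Since $p \neq 2$, lemma \ref{sbokc} shows that $G = 1 + p\OO_K$ is uniform of rank $d$ with $G_i = 1 + p^{i+1}\OO_K$. For $g \in G_i$ the previous estimate becomes $\vy(g \cdot u - u) \geq p^d \vy(u) \cdot p^{di}$, which by lemma \ref{mshalt} places $u$ in $(\et^+)^{G\hyphen d \hyphen \sh, \lambda, 0}$ with $p^\lambda = p^d \vy(u)$. Therefore $u \in (\et^+)^{d\hyphen\sh}$, and the conclusion $u \in (\et^+)^{e\hyphen\sh}$ follows (directly if $e = d$, or via remark \ref{efmsh} if $e \leq d$). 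The proof is a direct unwinding of definitions whose only numerical input is lemma \ref{diflt}, so there is no serious obstacle.
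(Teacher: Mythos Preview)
Your proof is correct and follows the same approach as the paper: both use the commutation relation to rewrite the orbit map as $g \mapsto [g](u)$, then appeal to lemma \ref{diflt} together with lemma \ref{sbokc} (the paper's $1+2p\OO_K$ equals your $1+p\OO_K$ since $p \neq 2$) to obtain the required super-H\"older estimate. Your write-up is simply more explicit about the substitution $Y \mapsto u$ and the invocation of lemma \ref{mshalt}, and you also note that the resulting exponent is in fact $d$, which is what is actually used in the proof of theorem \ref{ltcom}.
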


\begin{proof}
The group $\OO_K^\times$ acts on $\et^+$ by $g \cdot u = u \circ [g]$. By lemmas \ref{diflt} and \ref{sbokc}, the function $g \mapsto [g] \circ u$ is in $\HH_d^\lambda(1+ p \OO_K, \et^+)$, where $p^\lambda=\vy(u)$.
\end{proof}

\begin{proof}[Proof of theorem \ref{ltcom}]
Take $u \in \et$ such that $\vy(u)>0$ and $u \circ [g] = [g] \circ u$ for all $g \in \OO_K^\times$. By lemma \ref{locansh} and theorem \ref{decprec}, there is an $m \in \ZZ$ such that $f(Y) = u(Y)^{p^m}$ belongs to $Y \cdot k \dcroc{Y}$ and is separable. Take $g \in 1+\pi \OO_K$ such that $g$ is nontorsion, and let $w(Y) = [g](Y)$ so that $u \circ w = w \circ u$. We have $f \circ w = w^{(m)} \circ f$. By prop  \ref{lubnarch}, $f$ is invertible. In addition, $f \circ w = w^{(m)} \circ f$ if $w(Y) = [g](Y)$ for all $g \in \OO_K^\times$. Hence $f_0 \cdot \overline{g} = \overline{g}^{p^m} \cdot f_0$, so that $a^{p^m} = a$ for all $a = \overline{g} \in k$. This implies that $\FF_q \subset \FF_{p^{|m|}}$, so that $m=fn$ for some $n \in \ZZ$. Hence $w^{(m)} =w$, and $f \circ [g] = [g] \circ f$ for all $g \in \OO_K^\times$. Theorem 6 of \cite{LS07} implies that $f \in \Aut(\LT)$. Hence there exists $b \in \OO_K^\times$ such that $u(Y)=[b](Y^{q^n})$.
\end{proof}

\section{Mahler expansions and super-H\"older functions}
\label{mahlsec}

In \S 1.3 of \cite{BR22}, we proved an analogue of Mahler's theorem
for continuous functions $\Zp \to M$, and then gave a characterization of
super-H\"older functions in terms of their Mahler expansions. We now
indicate how these results generalize to functions $G \to M$ for a
uniform pro-$p$ group $G$. 
Given the definition of super-H\"older functions and the existence of a coordinate $c : G \to \Zp^d$ as in prop \ref{satpv}, it is enough to study functions $\Zp^d \to M$. We generalize the setting a little bit, and study functions $\OO_K^d \to M$ where $K$ is a finite extension of $\Qp$. Let $K$ be such a field, fix a uniformizer $\pi$ of $\OO_K$ and let $k$ be the residue field of $K$. Let $q=\Card(k)$. 

\subsection{Good bases and wavelets}
\label{goodbases}

Let $X=\OO_K^d$, which we endow with the valuation $\vx(x_1,\dots,x_d) =\min_i \val_\pi(x_i)$. For  $n \geq 0$, let $X_n = \pi^n X = \{x \in X, \vx(x) \geq n\}$. 

We endow $X$ with the $\vx$-adic topology.
For any set $Y$, we denote by $\LC(X,Y)$ the set of locally constant
functions $X \to Y$. For $n \geq 0$ we denote by $\LC_n(X,Y)$
the subset of elements of $\LC(X,Y)$ that factor through $X/X_n$.
Let $I = \cup_{n \geq 0} I_n$ be a set of indices, where $I_n \subset I_{n+1}$ 
for all $n \geq 0$, and $\Card(I_n) = \Card(X/X_n)=q^{nd}$.
Let $E$ be a field of characteristic $p$.

\begin{defi}
\label{defgood}
A family $\{h_i\}_{i \in I}$ is a good basis of $\LC(X,E)$ if it is a basis
of the $E$-vector space $\LC(X,E)$ such that for all $n \geq 0$, $\{h_i\}_{i\in I_n}$ is
a basis of $\LC_n(X,E)$.
\end{defi}

Let $M$ be (as usual) an $E$-vector
space with a valuation $\vm$, such that $\vm(ax) = \vm(x)$ for all $a \in
E^\times$ and $x \in M$. We assume that $M$ is separated and complete for the
$\vm$-adic topology.

\begin{prop}
\label{basisLC}
Every $f \in \LC_n(X,M)$ can be written uniquely as $\sum_{i \in
I_n}h_i\cdot m_i$ for some elements $m_i \in M$. Moreover, $\inf_{x \in
X}\vm(f(x)) = \inf_{i \in I_n}\vm(m_i)$.
\end{prop}

\begin{proof}
Let $\{\delta_x\}_{x \in X/X_n}$ be the basis of $\LC_n(X,E)$ defined as
follows: $\delta_x$ is the characteristic function of $x+X_n$. 
Then $f \in \LC_n(X,M)$ is equal to $\sum_{x\in X/X_n}\delta_x\cdot
f(x)$.

As $\{h_i\}_{i\in I_n}$ is also a basis of $\LC_n(X,E)$, we can write
$\delta_x = \sum_{i\in I_n}a_{i,x}h_i$ for some elements $a_{i,x} \in E$. 
We now have $f = \sum_{i\in I_n}h_i\cdot m_i$ where $m_i = \sum_{x\in X/X_n}a_{i,x}f(x)$.
This formula implies that $\inf_{i\in I_n}\vm(m_i) \geq \inf_{x\in X}\vm(f(x))$. 

On the other hand we can also write $h_i = \sum_{x\in
X/X_n}b_{x,i}\delta_x$ for some elements $b_{x,i} \in E$, so
that $f(x) = \sum_{i\in I_n}b_{x,i}m_i$. This implies that 
$\inf_{i\in I_n}\vm(m_i) \leq \inf_{x\in X}\vm(f(x))$. 
\end{proof}

We now give an example of a particularly nice good basis of
$\LC(X,E)$, the basis of wavelets (see \S I.3 of \cite{C10} and \S 2.1 of \cite{M16}).
Let $\cT$ be a set of representatives of $X/X_1$ in $X$, chosen so that the
representative of $0$ is $0$. For each $n \geq 0$, let $\cR_n$ be the set of
representatives of $X/X_n$ defined as follows: $\cR_0 = \{0\}$, and for $n \geq
1$, $\cR_n = \{\sum_{i=0}^{n-1}\pi^i x_i$, $x_i \in \cT$ for all $i\}$.
We have $\cR_1 = \cT$, and $\cR_n \subset \cR_{n+1}$ for all $n$.
Let $\cR = \cup_{n \geq 0} \cR_n$. If 
$r \in \cR$ let $\ell(r)$ be the smallest $n$ such that $r \in \cR_n$.
For $r \in \cR$, let $\chi_r$ be the characteristic function of the
closed disc $r+X_{\ell(r)} = \{x \in X, \vx(x-r) \geq \ell(r)\}$. 

\begin{prop}
\label{wavebasis}
The set $\{\chi_r\}_{r \in \cR}$ is a good basis of $\LC(X,E)$.
\end{prop}

\begin{proof}
We prove that for all $n \geq 0$, the set $\{\chi_r\}_{r \in \cR_n}$ is a basis of $\LC_n(X,E)$. 
Consider the basis $\{\delta_r\}_{r\in \cR_n}$ of $\LC_n(X,E)$, where
$\delta_r$ is the characteristic function of $r+X_n$. 
We have 
\[ \chi_r = \sum_{r' \in \cR_{n-\ell(r)}}\delta_{r+\pi^{\ell(r)}r'}. \]

This implies that if we write $\cR_n = (\cR_n \setminus \cR_{n-1}) \sqcup \hdots \sqcup (\cR_1 \setminus \cR_0) \sqcup \cR_0$ and we express the
family $\{\chi_r\}_{r\in \cR_n}$ in terms of the basis $\{\delta_r\}_{r\in \cR_n}$, we
get a unipotent matrix. This shows that $\{\chi_r\}_{r \in \cR_n}$ is
also a basis of $\LC_n(X,E)$.
\end{proof}

\subsection{Expansions of continuous functions}
\label{expcont}

We  show that every continuous function $X \to M$ has a convergent expansion along a good basis of $X$, and prove some continuity estimates in terms of the coefficients of the expansion. If $\{m_i\}_{i\in I}$ is a family of $M$, we say that $m_i \to 0$ if $\inf_{i \notin I_n} \vm(m_i) \to + \infty$ as $n \to + \infty$.

\begin{theo}
\label{expansion}
Let $\{h_i\}_{i \in I}$ be a good basis of $\LC(X,E)$. 

If $\{m_i\}_{i\in I}$ is a family of $M$ such that $m_i \to 0$,  the function $f : X \to M$ given by $f = \sum_{i\in I}h_i \cdot m_i$ belongs to $C^0(X,M)$, and  $\inf_{x\in X} \vm(f(x)) = \inf_{i\in I}\vm(m_i)$.

Conversely, if $f \in C^0(X,M)$, there exists a unique family $\{m_i(f)\}_{i \in I}$ of elements
of $M$ such that $m_i(f) \to 0$ and such that $f = \sum_{i\in I}h_i \cdot m_i(f)$.
\end{theo}

\begin{proof}
Let $\{m_i\}_{i\in I}$ be a family of $M$ such that $m_i \to
0$. If $f_n = \sum_{i \in I_n}h_i\cdot m_i$, then $f_n
\in C^0(X,M)$, and $f$ is the uniform limit of the $f_n$.
We have $\inf_X \vm(f_n(x)) = \inf_{i\in I_n} \vm(m_i)$ by prop
\ref{basisLC}. Since $m_i \to 0$, we have 
$\inf_{i\in I}\vm(m_i) = \inf_{i\in I_n} \vm(m_i)$ for $n \gg 0$.
Hence $\inf_X\vm(f_n(x)) = \inf_{i\in I}\vm(m_i)$ for $n \gg 0$.
Since $\inf_{x\in X}\vm(f(x)) = \lim_n \inf_x\vm(f_n(x))$, we have
$\inf_{x\in X}\vm(f(x)) = \inf_{i\in I}\vm(m_i)$.

We now prove the converse. Let $M_n = \{ m \in M,
\vm(m) \geq n\}$, let $\pi_n : M \to M/M_n$ be the
projection, and for each $n$, fix a lift $\psi_n : M/M_n \to M$. 
Take $f \in C^0(X,M)$, and let $f_n = \psi_n \circ \pi_n \circ f$. As $f$ and $f_n$ co\"incide modulo
$M_n$, $f$ is the uniform limit of the $f_n$. On the other hand,
$\pi_n \circ f$ is locally constant, and therefore so is $f_n$. As $X$ is compact,
there exists some $k(n)
\geq 0$ such that $f_n \in \LC_{k(n)}(X,M)$. By prop \ref{basisLC}, we can write 
$f_n = \sum_{i \in I} h_i \cdot m_{i,n}$, where $m_{i,n} = 0$ if $i \notin I_{k(n)}$.
We have $\vm(m_{i,n}-m_{i,n'}) \geq \min(n,n')$ by construction, 
so that for each $i$, the sequence $\{m_{i,n}\}_n$ converges to some $m_i \in M$.
Moreover, if $i \notin I_{k(n)}$, then $\vm(m_i) \geq n$, so that $m_i \to 0$. 
The continuous function $\sum_{i\in I} h_i \cdot m_i$ is the uniform
limit of the $f_n$, so that finally $f = \sum_{i \in I}h_i \cdot m_i$.
\end{proof}

\begin{prop}
\label{equalgood}
Take $f \in C^0(X,M)$ and $t \in \ZZ_{\geq 0}$. If $\{h_i\}_{i \in I}$ is a good basis of $\LC(X,E)$, and we write $f = \sum_i h_i\cdot m_i$ with $m_i \to 0$, then $\inf_{i\not\in I_t}\vm(m_i)$ depends only on $f$ and not on the choice of the good basis.
\end{prop}

\begin{proof}
Fix two good bases $\{h_i\}_{i \in I}$ and $\{h'_i\}_{i \in I}$ of $\LC(X,E)$. There exists a
family $\{\lambda_{i,j}\}_{(i,j) \in I \times I}$ of elements of $E$ such that $h_i =
\sum_j\lambda_{i,j}h'_j$ for all $i$.
Moreover, if $i \in I_c$
then $\lambda_{i,j} = 0$ for all $j \not\in I_c$.
Now write $f = \sum_{i \in I} h_i \cdot m_i(f) = \sum_{i \in I} h'_i \cdot m_i'(f)$.
We also have \[ f = \sum_i (\sum_j \lambda_{i,j}h'_j) \cdot m_i(f)
= \sum_j h'_j \cdot (\sum_i \lambda_{i,j}m_i(f)), \] 
so that $m'_j(f) = \sum_i \lambda_{i,j}m_i(f)$. If $j \not\in I_t$, then 
$m'_j(f) = \sum_{i \not\in I_t} \lambda_{i,j}m_i(f)$, as $\lambda_{i,j} = 0$
if $i \in I_t$ and $j \not\in I_t$. This implies that 
$\inf_{j \not\in I_t}\vm(m_j'(f)) \geq \inf_{i \not\in
I_t}\vm(m_i(f))$. 

By symmetry, we get that $\inf_{j \not\in I_t}\vm(m_j'(f)) = \inf_{i \not\in
I_t}\vm(m_i(f))$.
\end{proof}

\begin{theo}
\label{equal}
Take $f \in C^0(X,M)$ and $t \in \ZZ_{\geq 0}$. 

If $\{h_i\}_{i \in I}$ is a good basis of $\LC(X,E)$, and we write $f = \sum_i h_i\cdot m_i$ with $m_i \to 0$, then 
\[ \inf_{i\not\in I_t}\vm(m_i)  = \inf_{\substack{x,y \in X \\ \vx(x-y) \geq t}}\vm(f(x)-f(y)). \]
\end{theo}

\begin{proof}
Let $C_t(f) = \inf_{x,y \in X,\vx(x-y) \geq t}\vm(f(x)-f(y))$ and $B_t(f) = \inf_{i \not\in I_t}\vm(m_i)$.

If $x \in X$ and $z \in X_t$, then 
$f(x+z)-f(x) = \sum_{i \in I}\left(h_i(x+z)-h_i(z)\right) \cdot m_i(f)$.
As $h_i \in \LC_t(X,E)$ for $i \in  I_t$, the above equality gives
us \[ f(x+z)-f(x) = \sum_{i \not\in I_t}\left(h_i(x+z)-h_i(z)\right) \cdot m_i(f). \]
This implies that $C_t(f) \geq B_t(f)$.

We now prove the converse inequality. By prop \ref{equalgood},
$B_t(f)$ is independent of the choice of a good basis, and we choose the wavelet basis of
prop \ref{wavebasis}. Write $f = \sum_{r \in \cR}\chi_r \cdot
m_r(f)$, so that we want to show that $\vm(m_r(f)) \geq C_t(f)$ for all $r \notin \cR_t$. 
If $x \in X$, define $g_x : X \to M$
by $g_x(z) = f(x+\pi^tz)-f(x)$, and write $g_x = \sum_{r\in \cR}\chi_r\cdot m_r(g_x)$.  
For each $r \in \cR$, we can write uniquely $r = r_t + \pi^t s$ with $r_t
\in \cR_t$, where $s = 0$ if $r \in \cR_t$, and $s \neq 0 \in
\cR_{\ell(r)-t}$ if $r \notin \cR_t$. For $x \in \cR_t$ and $r \notin \cR_t$, the map
$z \mapsto \chi_r(x+\pi^tz) -\chi_r(x)$ is the zero function if $r_t \neq
x$, and is $\chi_s$ if $r_t = x$. This implies that if $x \in \cR_t$, then
\begin{align*}
g_x(z) &= \sum_{r\in \cR} \left(\chi_r(x+\pi^tz)-\chi_r(x)\right) \cdot
m_r(f) \\
 &= \sum_{r\notin \cR_t} \left(\chi_r(x+\pi^tz)-\chi_r(x)\right) \cdot m_r(f) \\
 &= \sum_{s \notin \cR_0} \chi_s(z) \cdot m_{x +\pi^t s}(f).
\end{align*}
Therefore if $x \in \cR_t$, then $m_0(g_x)=0$ and $m_s(g_x) = m_{x+\pi^t s}(f)$ if $s \neq 0$.
We have $\inf_{s \in \cR} \vm(m_s(g_x)) = \inf_{z \in X}\vm(g_x(z)) \geq C_t(f)$, so that $\vm(m_s(g_x)) \geq C_t(f)$ for all $x \in X$ and $s \in \cR$. This implies that for all $x \in \cR_t$ and $s \neq 0$, $\vm(m_{x+\pi^t s}(f)) \geq C_t(f)$. Hence for all $r \notin \cR_t$, we have $\vm(m_r(f)) \geq C_t(f)$.
\end{proof}

\subsection{Mahler bases}
\label{mahlgen}

We now construct some other examples of good bases. For $n \geq 0$, let $\Int_n(\OO_K)$ denote the set of polynomials $f(T)
\in K[T]$ such that $\deg(P) \leq n$ and $f(\OO_K) \subset \OO_K$. Recall
(see for instance \S 1.2 of \cite{M16}) that a Mahler basis for $\OO_K$ is a sequence $\{h_n\}_{n
\geq 0}$ with $h_n(T) \in K[T]$ of degree $n$, and such that
$\{h_0,\hdots,h_n\}$ is a basis of the free $\OO_K$-module
$\Int_n(\OO_K)$ for all $n \geq 0$. For example, if $K=\Qp$, we can take
$h_n(T) = \binom{T}{n}$. Let $\{h_n\}_{n \geq 0}$ be a Mahler basis for $\OO_K$. Each
$h_n$ defines a function $\OO_K \to \OO_K$ and hence $\OO_K \to k$.
Let $I=\ZZ_{\geq 0}$ and let $I_n = \{ 0, \hdots, q^n-1\}$ for $n \geq 0$.

\begin{prop}
\label{dsmb}
If $\{h_n\}_{n \geq 0}$ is a Mahler basis for $\OO_K$, then $\{h_i\}_{i \in I}$ is a good basis of $\LC(\OO_K,k)$.
\end{prop}

\begin{proof}
By theorem 1.2 of \cite{M16}, $\{h_0,\hdots,h_{q^m-1}\}$ is a basis of 
the $k$-vector space $\LC_m(\OO_K,k)$ for all $m \geq 0$. This
implies the claim.
\end{proof}

We now specialize to $K=\Qp$. 
Write $\NN$ for $\ZZ_{\geq 0}$ and $\nbf$ for an element $(n_1,\hdots,n_d) \in \NN^d$.
For each $\nbf \in \NN^d$, we denote by $h_\nbf$ the
function $\Zp^d \to E$ given by $(x_1,\dots,x_d) \mapsto \binom{x_1}{n_1}\cdots
\binom{x_d}{n_d}$. 
For $m \in \ZZ_{\geq 0}$, let $I_m = \{\nbf \in \NN^d$ such that $\max(n_1,\hdots,n_d) \leq p^m-1\}$.

\begin{prop}
\label{goodbasisZpd}
The functions $\{h_\nbf\}_{\nbf\in \NN^d}$ form a
good basis of $\LC(\Zp^d,\FF_p)$.
\end{prop}

\begin{proof}
The claim follows from
prop \ref{dsmb} for $K = \Qp$, and lemma \ref{prodgood} below.
\end{proof}

\begin{lemm}
\label{prodgood}
If $X$ and $X'$ are as in \S \ref{goodbases}, and $\{h_i\}_{i \in I}$ and $\{h'_j\}_{j \in J}$ are 
good bases of $\LC(X,E)$ and $\LC(X',E)$, then $\{ h_i \otimes h'_j\}_{(i,j) \in I \times J}$ is a good basis 
of $\LC(X \times X',E)$, with $(I \times J)_n = I_n \times J_n$.
\end{lemm}

Let $G$ be a uniform pro-$p$ group, and let $c : G \to \Zp^d$ be a coordinate as in prop \ref{satpv}. The theorem below follows from prop \ref{goodbasisZpd}, theorem \ref{expansion}, and theorem \ref{equal}.

\begin{theo}
\label{mahlthm}
If $\{m_\nbf\}_{\nbf \in \NN^d}$ is a sequence of $M$ such that $m_\nbf
\to 0$, the function $f : G \to M$ given by $f(g)
= \sum_{\nbf \in \NN^d} \binom{c_1(g)}{n_1} \cdots \binom{c_d(g)}{n_d}
m_\nbf$ belongs to $C^0(G,M)$. We have $\inf_{g \in G} \vm(f(g))
=\inf_{\nbf \in \NN^d} \vm(m_\nbf)$.

Conversely, if $f \in C^0(G,M)$, there exists a unique sequence
$\{m_\nbf(f)\}_{\nbf \in \NN^d}$ such that $m_\nbf(f) \to 0$ 
and such that $f(g) = \sum_{\nbf \in \NN^d}
\binom{c_1(g)}{n_1} \cdots \binom{c_d(g)}{n_d} m_\nbf(f)$.  

We have $f \in \HH_e^{\lambda,\mu}(G,M)$ if and only if
for all $i \geq 0$, we have $\vm(m_\nbf(f)) \geq p^\lambda \cdot p^{ei}
+\mu$ whenever $\max(n_1,\hdots,n_d) \geq p^i$.
\end{theo}

\begin{rema}
\label{ardakomment}
The first two assertions in the above theorem
also follow from theorem 1.2.4 in \S III of \cite{L65} (we thank Konstantin Ardakov for pointing this out).
\end{rema}

We finish by considering the case $G=\OO_K$ for $K$ a finite extension of $\Qp$, 
and working with a Mahler basis for $\OO_K$. Let $K$ be a finite extension of $\Qp$ as before.
Assume that $E$ is an extension of $k$. Let $\{h_n\}_{n \geq 0}$ be a Mahler basis for $\OO_K$. If $f \in C^0(\OO_K,M)$, write $f = \sum_{n \geq 0} h_n m_n(f)$ with $m_n(f) \to 0$. Let $e$ denote the ramification index of $K$.

\begin{prop}
\label{okansh}
If $f = \sum_{n \geq 0} h_n m_n(f)$ as above, then
$f \in \HH_t^{\lambda,\mu}(\OO_K,M)$ if and only if
$\vm(m_n(f)) \geq p^\lambda \cdot p^{ti}+\mu$ whenever
$n \geq p^{di}$.
\end{prop}

\begin{proof}
This follows from theorem \ref{equal}, since $\vp(x-y) \geq i$ if and
only if $\val_\pi(x-y) \geq ei$, and since $q^e = p^d$.
\end{proof}

In this situation we can also define a slightly different version of
super-H\"older functions. We say that a function $f : \OO_K \to M$ is 
in $\HH_{K,t}^{\lambda,\mu}(\OO_K,M)$ if 
$\vm(f(x) - f(y)) \geq p^\lambda\cdot p^{ti}+\mu$ whenever $\val_\pi(x-y)
\geq i$. We then have
\[ \HH_{te}^{\lambda+t(e-1),\mu}(\OO_K,M) \subset
\HH_{K,t}^{\lambda,\mu}(\OO_K,M) \subset
\HH_{te}^{\lambda,\mu}(\OO_K,M). \]
In particular, $\HH_{K,t}(\OO_K,M) = \HH_{te}(\OO_K,M)$.
If $K/\Qp$ is unramified then
$\HH_{K,t}^{\lambda,\mu}(\OO_K,M) =
\HH_{t}^{\lambda,\mu}(\OO_K,M)$.
Moreover we have the following criterion:

\begin{prop}
If $f = \sum_{n \geq 0} h_n m_n(f)$ as above,  then $f \in \HH_{K,t}^{\lambda,\mu}(\OO_K,M)$ if and only if
$\vm(m_n(f)) \geq p^\lambda \cdot p^{ti}+\mu$ whenever
$n \geq q^i$.
\end{prop}

\begin{exem}
\label{cnlt}
For all $n \geq 0$, there exists $c_n(T) \in \Int_n(\OO_K)$ such that $[a](Y) = \sum_{n \geq 0} c_n(a) Y^n$. 
This implies that $\vy\left(m_n(a \mapsto [a](Y))\right) \geq n$, so that the function $a \mapsto [a](Y)$ is in
$\HH_d^{0,0}(\OO_K,E\dcroc{Y})$, and in $\HH_{K,f}^{0,0}(\OO_K,E\dcroc{Y})$ where
$q = p^f$.
\end{exem}

%

\providecommand{\bysame}{\leavevmode ---\ }
\providecommand{\og}{``}
\providecommand{\fg}{''}
\providecommand{\smfandname}{\&}
\providecommand{\smfedsname}{\'eds.}
\providecommand{\smfedname}{\'ed.}
\providecommand{\smfmastersthesisname}{M\'emoire}
\providecommand{\smfphdthesisname}{Th\`ese}

\end{document}